\numberwithin{equation}{section}
\numberwithin{figure}{section}
\theoremstyle{plain}
\newtheorem{thm}{\protect\theoremname}
  \theoremstyle{plain}
  \newtheorem{conjecture}[thm]{\protect\conjecturename}
  \theoremstyle{definition}
  \newtheorem{defn}[thm]{\protect\definitionname}
  \theoremstyle{plain}
  \newtheorem{cor}[thm]{\protect\corollaryname}
  \theoremstyle{plain}
  \newtheorem{prop}[thm]{\protect\propositionname}
\newcommand\xyR[1]{\xydef@\xymatrixrowsep@{#1}}
\newcommand\xyC[1]{\xydef@\xymatrixcolsep@{#1}}
\newcommand\thistheoremname{}
\newtheorem{genericthm}[thm]{\thistheoremname}
\newenvironment{namedthm}[1]
{\renewcommand\thistheoremname{#1}\begin{genericthm}}
{\end{genericthm}}
\date{}
  \providecommand{\conjecturename}{Conjecture}
  \providecommand{\corollaryname}{Corollary}
  \providecommand{\definitionname}{Definition}
  \providecommand{\propositionname}{Proposition}
\providecommand{\theoremname}{Theorem}
\begin{document}
\lhead{Division Polynomials and Intersection of Projective Torsion Points}\rhead{Bogomolov and Fu}

\title{\textsc{Division Polynomials and Intersection}\\
\textsc{of Projective Torsion Points}}

\author{\textsc{Fedor Bogomolov} and \textsc{Hang Fu}}
\maketitle
\begin{quote}
\textbf{\small{}Abstract.}{\small{} Given two elliptic curves, each
of which is associated with a projection map that identifies opposite
elements with respect to the natural group structure, we investigate
how their corresponding projective images of torsion points intersect.}{\small \par}

\textbf{\small{}Keywords.}{\small{} Elliptic curve $\cdot$ Division
polynomial $\cdot$ Unlikely intersection $\cdot$ Jordan's totient
function}{\small \par}

\textbf{\small{}Mathematics Subject Classification.}{\small{} 14H52}{\small \par}
\end{quote}

\section{Introduction}

Throughout the article, let $K$ be a field of characteristic $0$,
$(E,O)$ an elliptic curve defined over $K$, $E[n]$ the collection
of $n$-th torsion points, $E^{*}[n]$ the collection of torsion points
of exact order $n$, $E[\infty]$ the collection of all torsion points,
and $\pi\in K(E)$ an even morphism of degree $2$.

\medskip{}

Bogomolov and Tschinkel \cite{1} observed that
\begin{thm}
\label{theorem 1}If $\pi_{1}(E_{1}[2])$ and $\pi_{2}(E_{2}[2])$
are different, then\textup{ }the intersection of $\pi_{1}(E_{1}[\infty])$
and $\pi_{2}(E_{2}[\infty])$ is finite.\end{thm}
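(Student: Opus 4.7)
The plan is to reduce the statement to a question about torsion points on a curve inside the abelian surface $E_{1}\times E_{2}$ and invoke Raynaud's theorem (Manin--Mumford). I would first form the correspondence
\[
C \;=\; \bigl\{(P_{1},P_{2})\in E_{1}\times E_{2} : \pi_{1}(P_{1})=\pi_{2}(P_{2})\bigr\},
\]
a closed subscheme of $E_{1}\times E_{2}$. The branch loci of $\pi_{1}$ and $\pi_{2}$ on $\mathbb{P}^{1}$ are $\pi_{1}(E_{1}[2])$ and $\pi_{2}(E_{2}[2])$, which are assumed distinct, so $\pi_{1}\neq\pi_{2}$ as degree-$2$ covers of $\mathbb{P}^{1}$ and hence $C$ is a proper subvariety of the surface, necessarily a curve.

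Next I would observe that if $\pi_{1}(E_{1}[\infty])\cap\pi_{2}(E_{2}[\infty])$ were infinite, each of its infinitely many elements would lift to at least one pair $(P_{1},P_{2})\in C$ with both $P_{i}$ torsion, so $C$ would contain infinitely many points that are torsion in the abelian surface. By Raynaud's theorem, some irreducible component $C_{0}$ of $C$ must then be a torsion translate of a $1$-dimensional abelian subvariety $A\subset E_{1}\times E_{2}$.

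I would then classify $A$. The only possibilities are (i) the two ``axes'' $E_{1}\times\{0\}$ and $\{0\}\times E_{2}$, or (ii) the image of a homomorphism $(\phi_{1},\phi_{2})\colon E\to E_{1}\times E_{2}$ for some elliptic curve $E$ together with isogenies $\phi_{i}\colon E\to E_{i}$ (which occurs only when $E_{1}$ and $E_{2}$ are isogenous). Case (i) is immediate: on $E_{1}\times\{T\}$ the function $\pi_{2}$ would be constant, forcing $\pi_{1}$ constant on $E_{1}$, absurd. In case (ii), writing $C_{0}=\{(\phi_{1}(Q)+T_{1},\phi_{2}(Q)+T_{2}):Q\in E\}$ with $T_{i}\in E_{i}[\infty]$, the identity
\[
\pi_{1}(\phi_{1}(Q)+T_{1}) \;=\; \pi_{2}(\phi_{2}(Q)+T_{2})
\]
holds on all of $E$. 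Since isogenies and translations are \'etale, the branch loci of the two sides, viewed as maps $E\to\mathbb{P}^{1}$, coincide with those of $\pi_{1}$ and $\pi_{2}$ respectively; the equality of maps therefore forces $\pi_{1}(E_{1}[2])=\pi_{2}(E_{2}[2])$, contradicting the hypothesis.

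The expected main obstacle is case (ii): it relies on the fact that composing with an \'etale morphism preserves the branch locus, and one must allow for arbitrary common isogenous factors $E$ rather than just a direct isogeny $E_{1}\to E_{2}$. A minor technical point beforehand is to verify that $C$ is pure of dimension one and to isolate a component of $C$ actually carrying infinitely many torsion points before applying Raynaud's theorem.
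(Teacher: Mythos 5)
Your proof is correct and arrives at the same destination, but it takes a genuinely different route from the paper. The paper forms the same fiber-product curve $C=(\pi_1\times\pi_2)^{-1}(\Delta)$ and then directly computes its genus via Riemann--Hurwitz: since the two branch loci share at most three of their four points, the normalization of $C$ has genus $5-k\ge 2$ (where $k$ is the number of shared branch points), so $C$ cannot be a translate of an abelian subvariety and Raynaud's theorem immediately gives finiteness. You instead apply Raynaud in its ``Manin--Mumford'' form --- infinitely many torsion points force a component $C_0$ to be a torsion coset of a one-dimensional abelian subvariety --- then classify those subvarieties (the two axes, or the image of an isogeny pair) and rule each case out by a branch-locus argument exploiting that isogenies and translations are \'etale. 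The trade-off is roughly this: the paper's genus computation is shorter but requires some care with the nodes of $C$ at the shared branch points (and with irreducibility of $C$, which holds under the hypothesis but is left implicit); your argument sidesteps the genus bookkeeping entirely and handles reducibility for free, at the cost of a case analysis and an invocation of the classification of abelian subvarieties of $E_1\times E_2$. Both proofs hinge on Raynaud's theorem as the essential input, and your \'etale/branch-locus observation in case (ii) is exactly the right mechanism; one might only tighten the wording in case (i), where the point is that $C_0=E_1\times\{T\}\subseteq C$ forces $\pi_1(P_1)=\pi_2(T)$ for all $P_1$, i.e.\ $\pi_1$ constant, which is absurd.
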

\begin{proof}
Consider the product map $\pi_{1}\times\pi_{2}:E_{1}\times E_{2}\rightarrow\mathbb{P}^{1}\times\mathbb{P}^{1}$,
let $\Delta$ be the diagonal curve. Suppose that $\#\pi_{1}(E_{1}[2])\cap\pi_{2}(E_{2}[2])=0\text{ }(\text{resp. }1,2,3)$,
then the preimage $(\pi_{1}\times\pi_{2})^{-1}(\Delta)$ is a curve
of genus $5\text{ }(\text{resp. }4,3,2)$, and hence contains only
finitely many torsion points of $E_{1}\times E_{2}$ by Raynaud's
theorem \cite{10}.
\end{proof}
However, we expect not only the finiteness, but also the existence
of a universal bound of the cardinality of their intersection.
\begin{conjecture}
\label{conjeture 2}
\[
\underset{\{(K,E_{1},O_{1},\pi_{1},E_{2},O_{2},\pi_{2}):\pi_{1}(E_{1}[2])\neq\pi_{2}(E_{2}[2])\}}{\textup{sup}}\#\pi_{1}(E_{1}[\infty])\cap\pi_{2}(E_{2}[\infty])<\infty
\]

\end{conjecture}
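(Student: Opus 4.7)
The plan is to recast the problem as a uniform Manin--Mumford question for curves of bounded genus and degree in abelian surfaces, and then to appeal to recent equidistribution results. First I would fix Weierstrass equations $y_{i}^{2}=f_{i}(x_{i})$ for each $(E_{i},O_{i})$ so that $\pi_{i}$ becomes projection to the $x_{i}$-coordinate; with this normalization, $\pi_{i}(E_{i}[\infty])\setminus\{\infty\}$ is exactly the set of roots in $\bar{K}$ of the division polynomials $\psi_{n,i}(x_{i})$ for $n\ge 1$, and the primitive factor $\psi_{n,i}^{*}$ has degree $J_{2}(n)/2$, where $J_{2}$ is Jordan's totient function.

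Next I would observe, as in the proof of Theorem \ref{theorem 1}, that the preimage $C=(\pi_{1}\times\pi_{2})^{-1}(\Delta)\subset E_{1}\times E_{2}$ is an irreducible curve whose normalization has genus $g\in\{2,3,4,5\}$ and which meets the product principal polarization $\Theta=(\{O_{1}\}\times E_{2})+(E_{1}\times\{O_{2}\})$ in degree $C\cdot\Theta=4$. Every common value $x\in\pi_{1}(E_{1}[\infty])\cap\pi_{2}(E_{2}[\infty])$ lifts to at most four torsion points $(\pm P_{1},\pm P_{2})\in C$, since $\pi_{i}$ is even; so it suffices to produce a uniform upper bound on $\#(C\cap(E_{1}\times E_{2})_{\mathrm{tors}})$ depending only on $g$ and on the polarization class.

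At this stage I would invoke a uniform Manin--Mumford theorem for curves of bounded genus in varying abelian surfaces, in the form provided by K\"uhne's recent proof of the uniform Bogomolov conjecture and by the related work of Dimitrov--Gao--Habegger. Their bounds depend only on the genus and on the degree with respect to a fixed polarization, both absolutely bounded in our situation, so combining with the factor of four from the previous step yields the universal constant predicted by Conjecture \ref{conjeture 2}.

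The hardest part, and presumably the reason this is recorded only as a conjecture, is that the above sketch is non-effective and relies on deep equidistribution machinery. In view of the title and the keyword ``Jordan's totient function'', I expect the authors to pursue a more elementary route: realise each point of $\pi_{1}(E_{1}^{*}[n_{1}])\cap\pi_{2}(E_{2}^{*}[n_{2}])$ as a common root of $\psi_{n_{1},1}^{*}$ and $\psi_{n_{2},2}^{*}$, exploit the degree formula $\deg\psi_{n,i}^{*}=J_{2}(n)/2$ together with a resultant or Galois-orbit estimate to bound the count for each pair $(n_{1},n_{2})$, and then show that the count vanishes once $\max(n_{1},n_{2})$ exceeds an explicit threshold determined by $\#\pi_{1}(E_{1}[2])\cap\pi_{2}(E_{2}[2])$. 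The principal obstacle on this effective route is ruling out sporadic coincidences for the finitely many small pairs $(n_{1},n_{2})$ and producing a bound that is genuinely independent of the field of definition $K$.
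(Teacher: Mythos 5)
The statement you are being asked about is explicitly labelled a \emph{conjecture} in the paper, and the authors do not give a proof; what they provide in Section~\ref{section 3} are conditional upper bounds (Theorems \ref{theorem 12} and \ref{theorem 13}, under the extra hypotheses $\pi_1(O_1)=\pi_2(O_2)$ and maximal image of Galois) and a construction yielding the lower bound $14$ (Theorem \ref{theorem 18}). So there is no ``paper's own proof'' to compare against, and your closing speculation that the authors pursue an effective division-polynomial argument to prove the conjecture is not accurate: the division polynomials are used only to establish the rigidity results of Section~\ref{section 2} and the conditional/partial results of Section~\ref{section 3}.

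That said, your first route is genuinely interesting and, in my view, correct in outline: once one observes (as in the proof of Theorem \ref{theorem 1}) that $C=(\pi_1\times\pi_2)^{-1}(\Delta)\subset E_1\times E_2$ is a curve of genus $5-m$ where $m=\#\pi_1(E_1[2])\cap\pi_2(E_2[2])\le 3$, of bounded degree with respect to the product principal polarization, the conjecture becomes an instance of uniform Manin--Mumford for curves in abelian surfaces. The results you cite (K\"uhne's equidistribution theorem, Dimitrov--Gao--Habegger, and the extension by Gao--Ge--K\"uhne to subvarieties of arbitrary polarized abelian varieties) postdate this paper and do appear to settle the conjecture. You should, however, be careful on two points. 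First, K\"uhne's original theorem treats curves embedded in their own Jacobians via an Abel--Jacobi map; a torsion point of $E_1\times E_2$ lying on $C$ does not obviously pull back to a torsion point of $\mathrm{Jac}(C)$, so you must invoke the version for subvarieties of arbitrary polarized abelian varieties (Gao--Ge--K\"uhne), bounding the count in terms of $\dim A=2$ and $\deg_\Theta C$. Second, you assert irreducibility of $C$ without justification; it does hold here because $\pi_1(E_1[2])\ne\pi_2(E_2[2])$ forces the two quadratic extensions of $K(\mathbb{P}^1)$ cut out by $E_1,E_2$ to be distinct, so their fiber product over $\mathbb{P}^1$ is irreducible — but if one works with the more general uniform theorem one can also just handle each component separately, since no component can be a torsion coset once $g(C)\ge 2$. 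With those points tightened, your sketch is a valid proof of Conjecture~\ref{conjeture 2}, and is worth recording as a remark that the conjecture is now a theorem.
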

Here the supremum is taken over all $K$, but clearly we can restrict
$K=\bar{\mathbb{\mathbb{Q}}}$ or $K=\mathbb{C}$. In section \ref{section 3},
theorem \ref{theorem 12} and \ref{theorem 13} will indicate that
under some mild conditions, the cardinality is small, while theorem
\ref{theorem 18} will give a construction to show that it can be
at least $14$. (See also remark \ref{remark 19}.) The main tool
to achieve these results is the explicit calculation of division polynomials,
which will be introduced and developed in section \ref{section 2}.
Jordan's totient function, as an ingredient of division polynomials,
will be briefly discussed in the appendix. Calculations were assisted
by Mathematica 10.0 \cite{14}.

\section{\label{section 2}Division Polynomials}

Now let $E:y^{2}+a_{1}xy+a_{3}y=x^{3}+a_{2}x^{2}+a_{4}x+a_{6}$ be
in generalized Weierstrass form with identity element $O^{W}=(0:1:0)$,
then it has a canonical projection map $\pi^{W}:E\rightarrow\mathbb{P}^{1},(x,y)\mapsto x$.
We have the standard quantities
\begin{eqnarray*}
b_{2} & = & a_{1}^{2}+4a_{2},\\
b_{4} & = & 2a_{4}+a_{1}a_{3},\\
b_{6} & = & a_{3}^{2}+4a_{6},\\
b_{8} & = & a_{1}^{2}a_{6}+4a_{2}a_{6}-a_{1}a_{3}a_{4}+a_{2}a_{3}^{2}-a_{4}^{2},\\
c_{4} & = & b_{2}^{2}-24b_{4},\\
c_{6} & = & -b_{2}^{3}+36b_{2}b_{4}-216b_{6},\\
\Delta & = & -b_{2}^{2}b_{8}-8b_{4}^{3}-27b_{6}^{2}+9b_{2}b_{4}b_{6},\\
j & = & c_{4}^{3}/\Delta,
\end{eqnarray*}
with relations
\begin{eqnarray*}
4b_{8} & = & b_{2}b_{6}-b_{4}^{2},\\
1728\Delta & = & c_{4}^{3}-c_{6}^{2}.
\end{eqnarray*}
Traditionally, the division polynomials $\psi_{n}$ \cite{12} are
defined by the initial values
\begin{eqnarray*}
\psi_{1} & = & 1,\\
\psi_{2} & = & 2y+a_{1}x+a_{3},\\
\psi_{3} & = & 3x^{4}+b_{2}x^{3}+3b_{4}x^{2}+3b_{6}x+b_{8},\\
\psi_{4} & = & \psi_{2}\cdot(2x^{6}+b_{2}x^{5}+5b_{4}x^{4}+10b_{6}x^{3}+10b_{8}x^{2}+(b_{2}b_{8}-b_{4}b_{6})x+(b_{4}b_{8}-b_{6}^{2})),
\end{eqnarray*}
and the inductive formulas
\begin{eqnarray*}
\psi_{2n+1} & = & \psi_{n}^{3}\psi_{n+2}-\psi_{n-1}\psi_{n+1}^{3}\text{ for }n\geq2,\\
\psi_{2}\psi_{2n} & = & \psi_{n-1}^{2}\psi_{n}\psi_{n+2}-\psi_{n-2}\psi_{n}\psi_{n+1}^{2}\text{ for }n\geq3.
\end{eqnarray*}
Notice that
\[
\psi_{2}^{2}=4x^{3}+b_{2}x^{2}+2b_{4}x+b_{6}.
\]

Since $\text{char}(K)=0$, we can eliminate $b_{8}$ and the leading
coefficients.
\begin{defn}
Let $n>1$,\\
(A) the normalized $n$-th division polynomial
\[
f_{n}(x)=\prod_{\{P:P\in E[n]\setminus\{O^{W}\}\}}(x-\pi^{W}(P));
\]
(B) the normalized $n$-th primitive division polynomial
\[
F_{n}(x)=\prod_{\{\pi^{W}(P):P\in E^{*}[n]\}}(x-\pi^{W}(P)).
\]
\end{defn}
\begin{thm}
We have the following explicit formulas:\\
\textup{(A)}
\[
f_{n}(x)=\sum_{\{(r,s,t):r+2s+3t\leq d(n)\}}c_{r,s,t}(n)b_{2}^{r}b_{4}^{s}b_{6}^{t}x^{d(n)-(r+2s+3t)},
\]
the degree and the first three coefficients are 
\begin{eqnarray*}
d(n) & = & n^{2}-1,\\
c_{1,0,0}(n) & = & \frac{n^{2}-1}{12}=\frac{1}{12}n^{2}-\frac{1}{12},\\
c_{0,1,0}(n) & = & \frac{(n^{2}-1)(n^{2}+6)}{60}=\frac{1}{60}n^{4}+\frac{1}{12}n^{2}-\frac{1}{10},\\
c_{0,0,1}(n) & = & \frac{(n^{2}-1)(n^{4}+n^{2}+15)}{420}=\frac{1}{420}n^{6}+\frac{1}{30}n^{2}-\frac{1}{28};
\end{eqnarray*}
\textup{(B)}
\[
F_{n}(x)=\sum_{\{(r,s,t):r+2s+3t\leq D(n)\}}C_{r,s,t}(n)b_{2}^{r}b_{4}^{s}b_{6}^{t}x^{D(n)-(r+2s+3t)},
\]
the degree and the first three coefficients are
\begin{eqnarray*}
D(n) & = & \frac{1}{2}J_{2}(n)I(n),\\
C_{1,0,0}(n) & = & \frac{1}{24}J_{2}(n)I(n),\\
C_{0,1,0}(n) & = & \left(\frac{1}{120}J_{4}(n)+\frac{1}{24}J_{2}(n)\right)I(n),\\
C_{0,0,1}(n) & = & \left(\frac{1}{840}J_{6}(n)+\frac{1}{60}J_{2}(n)\right)I(n),
\end{eqnarray*}
where
\[
I(n)=\begin{cases}
2, & \textup{if }n=2,\\
1, & \textup{if }n>2,
\end{cases}
\]
and 
\[
J_{k}(n)=n^{k}\prod_{p|n}\left(1-\frac{1}{p^{k}}\right)
\]
is Jordan's totient function.\end{thm}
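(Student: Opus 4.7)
The plan for (A) is to first establish the structural form of $f_n$ and then compute the three leading coefficients. By the standard theory, after eliminating $y$ via the curve equation and $b_8$ via $4b_8 = b_2b_6 - b_4^2$, the classical division polynomial satisfies $\psi_n^2 \in \mathbb{Q}[b_2, b_4, b_6][x]$ and has the same divisor of roots as $f_n$: each pair $\{P,-P\}$ of non-$2$-torsion contributes a double root, and each non-trivial $2$-torsion point a simple root. Since the leading coefficient of $\psi_n^2$ as a polynomial in $x$ is $n^2$, one has $f_n = \psi_n^2/n^2$. An induction on $n$ along the recurrence, with base cases $\psi_1,\ \psi_2^2 = 4x^3 + b_2 x^2 + 2b_4 x + b_6,\ \psi_3,\ \psi_4$, shows that $f_n$ is isobaric of weight $n^2 - 1$ under the grading $\deg x = \deg b_2 = 1,\ \deg b_4 = 2,\ \deg b_6 = 3$, which forces the expansion stated in (A).

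To pin down $c_{1,0,0}, c_{0,1,0}, c_{0,0,1}$, I would first reduce to $a_1 = a_3 = 0$ by completing the square in $y$ (which leaves $x$-coordinates invariant), and then translate $x \mapsto x - a_2/3$ to reach short Weierstrass form $y^2 = x^3 + Ax + B$, in which $b_2 = 0$. In short form, the power sum $p_1 = \sum_P x_P$ is a weight-$1$ polynomial in $b_4, b_6$, hence vanishes; translating back recovers $p_1 = -(n^2-1)b_2/12$, so $c_{1,0,0}(n) = (n^2-1)/12$. For $p_2, p_3$ in short form, identify $x = \wp(z)$ (so $g_2 = -2b_4,\ g_3 = -b_6$) and use the elliptic partial-fraction identity
\[
\wp(nz) = \frac{1}{n^2}\sum_{\zeta \in E[n]} \wp(z-\zeta),
\]
whose difference from the left side is the constant $-p_1/n^2$, which vanishes in short form. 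Comparing the $z^2$ and $z^4$ Taylor coefficients of both sides and then using the ODEs $\wp'' = 6\wp^2 - g_2/2$ and $\wp^{(4)} = 120\wp^3 - 18g_2\wp - 12g_3$ to convert $\sum_\zeta \wp''(\zeta)$ and $\sum_\zeta \wp^{(4)}(\zeta)$ into $p_2(n), p_3(n)$ gives closed forms; Newton's identities $e_2 = -p_2/2$ and $e_3 = p_3/3$ (valid since $p_1 = 0$) then read off $c_{0,1,0}(n)$ and $c_{0,0,1}(n)$ as the coefficients of $b_4 x^{n^2-3}$ and $b_6 x^{n^2-4}$ in $f_n|_{b_2 = 0}$.

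For (B), multiplicity counting gives the factorization
\[
f_n(x) = F_2(x)^{[2 \mid n]}\prod_{\substack{d \mid n \\ d \geq 3}} F_d(x)^{2},
\]
reflecting that for $d \geq 3$ each distinct $x$-coordinate of an exact-order $d$ point is hit by the two points $\pm P$. Writing $\alpha_d = 2/I(d)$ so that $F_d$ appears with exponent $\alpha_d$, Möbius inversion at the level of logarithms gives
\[
\alpha_n \log F_n(x) = \sum_{d \mid n} \mu(n/d) \log f_d(x), \qquad f_1 := 1.
\]
Matching the coefficient of $\log x$ gives $\alpha_n D(n) = \sum_{d \mid n} \mu(n/d)(d^2-1) = J_2(n)$ for $n \geq 2$ (the $-1$ drops out by $\sum_{d\mid n}\mu(n/d) = 0$), whence $D(n) = (1/2)J_2(n)I(n)$. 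Expanding both sides as Laurent series in $1/x$ and matching the coefficients of $x^{-1}, x^{-2}, x^{-3}$ expresses each $C_{r,s,t}(n)$ as a Möbius convolution of the $c_{r,s,t}(d)$; the identity $\sum_{d \mid n} \mu(n/d) d^{2k} = J_{2k}(n)$ for $k = 1, 2, 3$ collapses these into the compact forms stated.

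The main obstacle is the analytic computation of $p_2(n)$ and $p_3(n)$: it requires careful bookkeeping of Taylor coefficients, together with the Eisenstein-to-Weierstrass normalizations $g_2 = 60 G_4,\ g_3 = 140 G_6$ and then $g_2 = -2b_4,\ g_3 = -b_6$. A purely algebraic route by induction along the $\psi_n$-recurrence is possible but combinatorially unwieldy, since the coefficients $c_{r,s,t}(n)$, viewed as polynomials in $n$, do not have a transparent inductive pattern beyond the three cases at hand.
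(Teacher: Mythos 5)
Your proof is correct and, for part (A), takes a genuinely different route from the paper's. The paper remains entirely in polynomial algebra: it rewrites the $\psi$-recurrence in the form $f_{2n+1} = \bigl(a\sqrt{f_n^3 f_{n+2}} - b\sqrt{f_{n-1}f_{n+1}^3}\bigr)^2$ with $a = n^3(n+2)/(2n+1)$, $b = (n-1)(n+1)^3/(2n+1)$, $a-b=1$, and observes that the nonlinear cross-terms arising on expansion contribute only to mixed monomials $b_2^2$, $b_2b_4$, $b_2^3$ and never to pure $b_4$ or $b_6$, so each of $d(n)$, $c_{1,0,0}(n)$, $c_{0,1,0}(n)$, $c_{0,0,1}(n)$ satisfies a single \emph{linear} recurrence in $n$, against which the stated closed forms are verified together with the base cases $n\le4$. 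You instead pass to the uniformization and extract $p_2$ and $p_3$ from the $z^2$ and $z^4$ Taylor coefficients of the distribution identity $\wp(nz)=n^{-2}\sum_{\zeta\in E[n]}\wp(z-\zeta)$, using the $\wp$-ODEs and Newton's identities; this is cleaner conceptually but needs the a priori isobaricity to justify the reduction to $b_2=0$ and the translation argument for $c_{1,0,0}$, both of which you handle correctly. Your closing remark that the algebraic induction is ``combinatorially unwieldy'' undersells the paper's method: the linearization trick is exactly what makes the algebraic route tractable for these low-weight coefficients. For part (B) the two arguments coincide in substance: your Möbius inversion of logarithms is the paper's observation that degree and the $b_i$-linear coefficients are additive across the factorization $f_n=\prod_{d\mid n,\,d\ne1}F_d^{2/I(d)}$, giving $t(n)=\sum_{d\mid n,\,d\ne1}(2/I(d))T(d)$ and then the $J_{2k}$ convolutions.
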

\begin{proof}
(A) Notice that $f_{n}(x)=\psi_{n}^{2}(x)/n^{2}$, so the inductive
formulas for $\psi_{n}$ can be transformed to
\begin{eqnarray*}
f_{2n+1} & = & \left(\frac{n^{3}(n+2)}{2n+1}\sqrt{f_{n}^{3}f_{n+2}}-\frac{(n-1)(n+1)^{3}}{2n+1}\sqrt{f_{n-1}f_{n+1}^{3}}\right)^{2}\text{ for }n\geq2,\\
f_{2n} & = & \left(\frac{(n-1)^{2}(n+2)}{4}\sqrt{\frac{f_{n-1}^{2}f_{n}f_{n+2}}{f_{2}}}-\frac{(n-2)(n+1)^{2}}{4}\sqrt{\frac{f_{n-2}f_{n}f_{n+1}^{2}}{f_{2}}}\right)^{2}\text{ for }n\geq3.
\end{eqnarray*}
In order to use induction to prove the formulas for $t(n)=d(n),$
$c_{1,0,0}(n)$, $c_{0,1,0}(n)$, and $c_{0,0,1}(n)$, we need to
check the initial values $t(n)$ for $1\leq n\leq4$, and verify that
they all satisfy
\begin{eqnarray*}
t(2n+1) & = & \frac{n^{3}(n+2)}{2n+1}(3t(n)+t(n+2))-\frac{(n-1)(n+1)^{3}}{2n+1}(t(n-1)+3t(n+1))\text{ for }n\geq2,\\
t(2n) & = & \frac{(n-1)^{2}(n+2)}{4}(2t(n-1)+t(n)+t(n+2)-t(2))\\
 &  & -\frac{(n-2)(n+1)^{2}}{4}(t(n-2)+t(n)+2t(n+1)-t(2))\text{ for }n\geq3.
\end{eqnarray*}
All of these can be easily done.\\
(B) By definition,
\[
f_{n}(x)=\prod_{d|n,d\neq1}F_{d}(x)^{2/I(d)},
\]
so for $T(n)=D(n)$, $C_{1,0,0}(n)$, $C_{0,1,0}(n)$, and $C_{0,0,1}(n)$,
we have 
\[
t(n)=\sum_{d|n,d\neq1}\frac{2}{I(d)}T(d).
\]
Then the rest is a standard application of M\"obius inversion formula.\end{proof}
\begin{thm}
\label{theorem 5}Let $E_{1}$ and $E_{2}$ be two elliptic curves
in generalized Weierstrass form, then the following are equivalent:\\
\textup{(A)} $b_{i}(E_{1})=b_{i}(E_{2}),i=2,4,6$;\\
\textup{(B)} for any $n$, $\pi^{W}(E_{1}^{*}[n])=\pi^{W}(E_{2}^{*}[n])$;\\
\textup{(C)} $\pi^{W}(E_{1}[\infty])=\pi^{W}(E_{2}[\infty])$;\\
\textup{(D)} $\pi^{W}(E_{1}[\infty])\cap\pi^{W}(E_{2}[\infty])$ is
infinite;\\
\textup{(E)} for some $n>1$, $\pi^{W}(E_{1}^{*}[n])=\pi^{W}(E_{2}^{*}[n])$;\\
\textup{(F)} for some $n_{1},\ldots,n_{k}>1$, $\pi^{W}(\cup_{i=1}^{k}E_{1}^{*}[n_{i}])=\pi^{W}(\cup_{i=1}^{k}E_{2}^{*}[n_{i}])$.\end{thm}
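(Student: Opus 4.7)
The plan is to verify the two cycles of implications
\[
(A) \Rightarrow (B) \Rightarrow (C) \Rightarrow (D) \Rightarrow (A) \quad\text{and}\quad (B) \Rightarrow (F) \Rightarrow (E) \Rightarrow (A),
\]
which together cover all six conditions. The formal steps are quick: (A)$\Rightarrow$(B) is immediate from the previous theorem, which exhibits $F_n$ as a polynomial in $b_2,b_4,b_6$; (B)$\Rightarrow$(C) follows from $E_i[\infty]=\{O_i\}\cup\bigsqcup_{n\geq 2}E_i^*[n]$ together with $\pi^W(O_i)=\infty$; (C)$\Rightarrow$(D) uses that $E_i[\infty]$ is infinite in characteristic zero; and (B)$\Rightarrow$(F)$\Rightarrow$(E) are trivial.

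For (E)$\Rightarrow$(A), I use that $F_n$ is monic with simple roots at $\pi^W(E^*[n])$, so the set equality forces $F_n(E_1,x)=F_n(E_2,x)$ as polynomials. The coefficients of $x^{D(n)-1}, x^{D(n)-2}, x^{D(n)-3}$ begin respectively with the linear terms $C_{1,0,0}(n)b_2$, $C_{0,1,0}(n)b_4$, $C_{0,0,1}(n)b_6$, plus (in the latter two) monomials involving only the already recovered $b_i$'s; since $C_{1,0,0}(n), C_{0,1,0}(n), C_{0,0,1}(n)>0$ for $n>1$, one successively solves for $b_2, b_4, b_6$. The same extraction applied to $G(x)=\prod_i F_{n_i}(x)$, whose roots are simple because $\pi^W(E^*[n_i])\cap\pi^W(E^*[n_j])=\emptyset$ for distinct $n_i,n_j$ (and one may assume the $n_i$ distinct), yields (F)$\Rightarrow$(A).

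The main obstacle is (D)$\Rightarrow$(A). After replacing $K$ by $\bar K$ (which does not affect torsion), set $C=(\pi_1^W\times\pi_2^W)^{-1}(\Delta)\subset E_1\times E_2$; by (D), $C$ contains infinitely many torsion points, so Raynaud's theorem yields an irreducible component of $C$ that is a torsion translate of a positive-dimensional abelian subvariety $A\subseteq E_1\times E_2$. Since $C$ is a proper curve and $\pi_i^W$ is nonconstant, $A$ cannot be $E_1\times\{O_2\}$, $\{O_1\}\times E_2$, or $E_1\times E_2$; hence $A$ is the graph of an isogeny $\varphi:E_1\to E_2$, and $\{(P,\varphi(P)+T):P\in E_1\}\subseteq C$ for some torsion $T\in E_2$. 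Evaluating at $P=O_1$ forces $\pi_2^W(T)=\infty$, so $T=O_2$; thus $\pi_2^W\circ\varphi=\pi_1^W$, whence $\deg\varphi=1$ and $\varphi$ is an isomorphism of pointed curves preserving the $x$-coordinate. Such isomorphisms of generalized Weierstrass curves are the substitutions $(x,y)\mapsto(x,\pm y+sx+t)$, and a direct check shows these preserve $b_2, b_4, b_6$, yielding (A). The delicate points will be invoking Manin--Mumford in the required generality and carefully enumerating the positive-dimensional abelian subvarieties of $E_1\times E_2$.
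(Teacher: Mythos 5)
The overall structure is sound, but two points deserve scrutiny. First, a small slip in the chain: you list the cycle as $(B)\Rightarrow(F)\Rightarrow(E)\Rightarrow(A)$ and call $(F)\Rightarrow(E)$ "trivial," but it is not — $(F)$ gives equality of a union of several $\pi^W(E^*[n_i])$, which does not a priori decompose as equality piece by piece. What is trivial is $(B)\Rightarrow(E)\Rightarrow(F)$, and since you prove $(F)\Rightarrow(A)$ directly, the chain $(A)\Rightarrow(B)\Rightarrow(E)\Rightarrow(F)\Rightarrow(A)$ closes without incident; this is exactly the cycle the paper uses. Your extraction of $b_2,b_4,b_6$ from the top coefficients of $F_n$ (resp. of $\prod F_{n_i}$, using disjointness of the $\pi^W(E^*[n_i])$ to get simple roots) matches the paper's argument for $(F)\Rightarrow(A)$.

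The genuinely different move is $(D)\Rightarrow(A)$. The paper disposes of $(D)$ cheaply: $(D)\Rightarrow(E)$ is the contrapositive of Theorem~\ref{theorem 1} (infinite intersection forces $\pi_1(E_1[2])=\pi_2(E_2[2])$, i.e.\ $(E)$ with $n=2$), and then one rides the cycle to $(A)$. You instead apply the full structural Manin--Mumford to the curve $C$ and try to go to $(A)$ directly. That route has a real gap at the step "hence $A$ is the graph of an isogeny $\varphi:E_1\to E_2$." A one-dimensional abelian subvariety of $E_1\times E_2$ surjecting onto both factors need not be a graph: both coordinate projections $p_1,p_2:A\to E_i$ are isogenies, and if $\deg p_1>1$ then $A$ is a genuine correspondence, not the graph of a morphism from $E_1$. (The bidegree of $C$ is $(2,2)$, so $\deg p_i\in\{1,2\}$, and in particular the irreducible case forces $\deg p_1=\deg p_2=2$.) There is also the translate $T$ to control, for which your evaluation at $P=O_1$ only makes sense once the graph assumption is in place. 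These cases can likely be handled by a deck-transformation analysis of $\pi_1^W\circ p_1=\pi_2^W\circ p_2$ on $A+T$, but as written the argument is incomplete — you flag this yourself at the end. Since the paper's shortcut via Theorem~\ref{theorem 1} avoids all of this, I would recommend either filling in the abelian-subvariety case analysis carefully or simply routing $(D)$ through $(E)$ as the paper does.
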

\begin{proof}
(A)$\Rightarrow$(B)$\Rightarrow$(C)$\Rightarrow$(D)$\Rightarrow$(E)$\Rightarrow$(F)
is clear, where (D)$\Rightarrow$(E) is given by theorem \ref{theorem 1}.
Assume (F), then $E_{1}$ and $E_{2}$ share the same $\prod_{i=1}^{k}F_{n_{i}}(x)$.
Since $C_{1,0,0}(n)$, $C_{0,1,0}(n)$, and $C_{0,0,1}(n)$ are all
strictly positive, the coefficients of $b_{2}$, $b_{4}$, and $b_{6}$
in the product $\prod_{i=1}^{k}F_{n_{i}}(x)$ will always be nonzero,
then $b_{2}$, $b_{4}$, and $b_{6}$ can be solved, which proves
(A).
\end{proof}
Now let us go back to the general $(E,O,\pi)$, and assume that $\pi(O)=\infty$.
By Riemann-Roch theorem, there exists an isomorphism $\phi:E\rightarrow E'$
such that $E'$ is in generalized Weierstrass form. Then $\phi$ induces
$\bar{\phi}\in\text{Aut}(\mathbb{\mathbb{P}}^{1})$ fixing $\infty$,
which must be a linear function \cite{5}. Its inverse $\bar{\phi}^{-1}$
can be lifted to $\hat{\phi}:E'\rightarrow E''$ such that $E''$
remains in generalized Weierstrass form. Thus the general case can
be reduced to the Weierstrass case. Note that we can make everything
above defined over $K$, except possibly $\hat{\phi}$ has to be defined
over a quadratic extension of $K$. 
\[
\xymatrix{(E,O)\ar[r]^{\phi}\ar[d]^{\pi} & (E',O^{W})\ar[r]^{\hat{\phi}}\ar[d]^{\pi^{W}} & (E'',O^{W})\ar[d]^{\pi^{W}}\\
(\mathbb{\mathbb{P}}^{1},\infty)\ar[r]^{\bar{\phi}}\ar@/_{0.25in}/[rr]_{id} & (\mathbb{\mathbb{P}}^{1},\infty)\ar[r]^{\bar{\phi}^{-1}} & (\mathbb{\mathbb{P}}^{1},\infty)
}
\]

\begin{cor}
\label{corollary 6}Given $(E_{1},O_{1},\pi_{1})$ and $(E_{2},O_{2},\pi_{2})$
such that $\pi_{1}(O_{1})=\pi_{2}(O_{2})$, then the following are
equivalent:\\
\textup{(A)} for any $n$, $\pi_{1}(E_{1}^{*}[n])=\pi_{2}(E_{2}^{*}[n])$;\\
\textup{(B)} for some $n>1$, $\pi_{1}(E_{1}^{*}[n])=\pi_{2}(E_{2}^{*}[n])$;\\
\textup{(C)} for some $n_{1},\ldots,n_{k}>1$,$\pi_{1}(\cup_{i=1}^{k}E_{1}^{*}[n_{i}])=\pi_{2}(\cup_{i=1}^{k}E_{2}^{*}[n_{i}])$;\\
\textup{(D)} $\pi_{1}(E_{1}[\infty])=\pi_{2}(E_{2}[\infty])$;\\
\textup{(E)} $\pi_{1}(E_{1}[\infty])\cap\pi_{2}(E_{2}[\infty])$ is
infinite.\end{cor}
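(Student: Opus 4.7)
The plan is to reduce Corollary \ref{corollary 6} directly to Theorem \ref{theorem 5}. The paragraph and commutative diagram immediately preceding the corollary already explain how any $(E,O,\pi)$ with $\pi(O)=\infty$ can be converted, via an isomorphism of elliptic curves $\hat{\phi}\circ\phi\colon E\to E''$, into a pair $(E'',\pi^W)$ in generalized Weierstrass form satisfying $\pi=\pi^W\circ(\hat{\phi}\circ\phi)$; this is the whole engine of the reduction.

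First I would normalize by composing both $\pi_1$ and $\pi_2$ with a common automorphism of $\mathbb{P}^1$, chosen to send their shared value $\pi_1(O_1)=\pi_2(O_2)$ to $\infty$. Since an automorphism of $\mathbb{P}^1$ is a bijection preserving both set equalities and infinite cardinality, none of the five conditions (A)--(E) is affected, and we may thereafter assume $\pi_i(O_i)=\infty$ for $i=1,2$.

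Then I would invoke the reduction diagram separately on each side, obtaining Weierstrass models $E_1''$ and $E_2''$ together with identity-preserving (hence group-theoretic) isomorphisms $\hat{\phi}_i\circ\phi_i\colon E_i\to E_i''$. These isomorphisms carry $E_i[\infty]$ and $E_i^{*}[n]$ bijectively onto $E_i''[\infty]$ and $(E_i'')^{*}[n]$, so $\pi_i(E_i[\infty])=\pi^W(E_i''[\infty])$ and $\pi_i(E_i^{*}[n])=\pi^W((E_i'')^{*}[n])$ as subsets of $\mathbb{P}^1$. Under this dictionary, conditions (A), (B), (C), (D), (E) of the corollary become, respectively, conditions (B), (E), (F), (C), (D) of Theorem \ref{theorem 5} for the pair $(E_1'',E_2'')$; all five of these are equivalent by that theorem, so the proof concludes.

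The only point that needs mild care, and the closest thing to an obstacle, is that, as the excerpt notes, $\hat{\phi}_i$ might be defined only over a quadratic extension of $K$. This is cosmetic rather than genuine: conditions (A)--(E) are statements about subsets of $\mathbb{P}^1(\bar{K})$, which are insensitive to the field of definition of the intermediate isomorphism, so the translation to $(E_1'',E_2'')$ is legitimate and Theorem \ref{theorem 5} applies over $\bar{K}$ as needed.
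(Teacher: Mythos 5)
Your proposal is correct and matches the paper's own (very terse) proof: the paper simply says to move $\pi_1(O_1)$ and $\pi_2(O_2)$ to $\infty$ by a common fractional linear transformation and then "use the above argument," which is exactly the reduction via the commutative diagram to Theorem~\ref{theorem 5} that you spell out in detail, including the correct dictionary between conditions and the observation that the quadratic-extension caveat for $\hat{\phi}$ is harmless.
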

\begin{proof}
First move $\pi_{1}(O_{1})$ and $\pi_{2}(O_{2})$ to $\infty$ by
a common fractional linear transformation, and then use the above
argument.
\end{proof}
The following corollary gives the converse of theorem \ref{theorem 1}.
\begin{cor}
Given $(E_{1},O_{1},\pi_{1})$ and $(E_{2},O_{2},\pi_{2})$, then
the following are equivalent:\\
\textup{(A)} $\pi_{1}(E_{1}[2])=\pi_{2}(E_{2}[2])$;\\
\textup{(B)} $\pi_{1}(E_{1}[\infty])=\pi_{2}(E_{2}[\infty])$;\\
\textup{(C)} $\pi_{1}(E_{1}[\infty])\cap\pi_{2}(E_{2}[\infty])$ is
infinite.\end{cor}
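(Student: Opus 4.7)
The plan is to establish the three implications separately. (B)$\Rightarrow$(C) is immediate because $E_i[\infty]$ is infinite, and (C)$\Rightarrow$(A) is precisely the contrapositive of Theorem \ref{theorem 1}. The substance of the corollary is (A)$\Rightarrow$(B), which I would reduce to Corollary \ref{corollary 6}; the only hypothesis of Corollary \ref{corollary 6} that is not given is the normalization $\pi_1(O_1)=\pi_2(O_2)$.

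To arrange that normalization, let $S=\pi_1(E_1[2])=\pi_2(E_2[2])$, a four-element subset of $\mathbb{P}^1$. I would produce a M\"obius transformation $T\in\text{Aut}(\mathbb{P}^1)$ with $T(S)=S$ and $T(\pi_2(O_2))=\pi_1(O_1)$ by descending translations by $2$-torsion on $E_2$: for every $\tau\in E_2[2]$ the translation $t_\tau$ commutes with $[-1]$, hence factors through $\pi_2$ as some $\bar t_\tau\in\text{Aut}(\mathbb{P}^1)$ with $\bar t_\tau(S)=S$, and the induced action of $\{\bar t_\tau\}_{\tau\in E_2[2]}$ on $S$ is the simply transitive action of $E_2[2]$ on itself, so some $\bar t_\tau$ serves as $T$. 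Replacing $\pi_2$ by $\tilde\pi_2:=T\circ\pi_2$, which is still an even degree-$2$ morphism, yields $\tilde\pi_2(O_2)=\pi_1(O_1)$ and $\tilde\pi_2(E_2^*[2])=T(S\setminus\{\pi_2(O_2)\})=S\setminus\{\pi_1(O_1)\}=\pi_1(E_1^*[2])$, so Corollary \ref{corollary 6} (with $n=2$) gives $\pi_1(E_1[\infty])=\tilde\pi_2(E_2[\infty])=T(\pi_2(E_2[\infty]))$.

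It remains to remove the $T$ from this last equality. Because $T$ was taken to be the descent of $t_\tau$ for some $\tau\in E_2[2]\subset E_2[\infty]$, and $E_2[\infty]$ is a subgroup of $E_2$, the translation $t_\tau$ preserves $E_2[\infty]$, so $T(\pi_2(E_2[\infty]))=\pi_2(t_\tau(E_2[\infty]))=\pi_2(E_2[\infty])$, proving (B). The only delicate point I foresee is this simultaneous bookkeeping: one must choose a single $T$ that at once preserves the branch locus $S$, rearranges the origin so that Corollary \ref{corollary 6} applies, and does not alter $\pi_2(E_2[\infty])$. The common source of all three properties is that the M\"obius automorphisms of $\mathbb{P}^1$ fixing $S$ are exactly the descents of $E_2[2]$-translations on the double cover $E_2\to\mathbb{P}^1$.
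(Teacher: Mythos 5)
Your proof is correct and follows essentially the same route as the paper: the paper likewise notes that (A) forces $\pi_1(O_1)=\pi_2(P)$ for some $P\in E_2[2]$, replaces $\pi_2$ by $\pi_2\circ[+P]$ (which is precisely your $T\circ\pi_2$ since the descended M\"obius map satisfies $\bar t_\tau\circ\pi_2=\pi_2\circ t_\tau$), invokes Corollary \ref{corollary 6}, and uses that translation by $P$ is a bijection of $E_2[\infty]$. Your exposition makes the descent to $\text{Aut}(\mathbb{P}^1)$ and the simply transitive action on $S$ explicit, but this is the same underlying mechanism, just spelled out in more detail.
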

\begin{proof}
Assume (A), if $\pi_{1}(O_{1})=\pi_{2}(P)$ for some $P\in E_{2}[2]$,
then the translation-by-$P$ map $[+P]$ acts on $E_{2}[2]$ and $E_{2}[\infty]$
bijectively, the projection map $\pi_{2}\circ[+P]$ satisfies $\pi_{1}(O_{1})=\pi_{2}\circ[+P](O_{2})$,
which implies (B) by corollary \ref{corollary 6}. (B)$\Rightarrow$(C)
is obvious, and (C)$\Rightarrow$(A) is a restatement of theorem \ref{theorem 1}.
\end{proof}
Given theorem \ref{theorem 5}, it is natural to ask

\begin{namedthm}{Question}\textup{\label{question 8}Let $E_{1}$
and $E_{2}$ be two elliptic curves in generalized Weierstrass form,
if $\pi^{W}(E_{1}^{*}[n_{1}])=\pi^{W}(E_{2}^{*}[n_{2}])$, can we
always conclude that $n_{1}=n_{2}$?}\end{namedthm}

If conjecture \ref{conjeture 2} is true, then the answer must be
yes, at least when $n_{1}$ and $n_{2}$ are large enough. A naive
attempt is to write their division polynomials more explicitly, and
then compare those coefficients. Clearly, a necessary condition is
$D(n_{1})=D(n_{2})$. However, since we have, for example,
\begin{eqnarray*}
 &  & D(5)=D(6)=12,\\
 &  & D(35)=D(40)=D(42)=576,\\
 &  & D(55)=D(57)=D(62)=D(66)=1440,
\end{eqnarray*}
$D(n)$ itself does not give a strong restriction on $n$. Notice
that $C_{1,0,0}(n)=D(n)/12$, so $b_{2}(E_{1})=b_{2}(E_{2})$, we
can assume the common value is $0$, and consequently $b_{2}$ disappears
in the formulas of $f_{n}(x)$ and $F_{n}(x)$. Based on the same
approach as before, together with a tedious calculation, we can obtain
all the coefficients $C_{0,s,t}(n)$ for $2s+3t\leq6$.
\begin{thm}
We have the following explicit formulas for the coefficients of $F_{n}(x)$:
\begin{eqnarray*}
C_{0,2,0}(n) & = & -\left(\frac{1}{16800}J_{8}(n)+\frac{1}{600}J_{4}(n)+\frac{5}{672}J_{2}(n)\right)I(n)\\
 &  & +\frac{1}{2}\left(\frac{1}{120}J_{4}(n)+\frac{1}{24}J_{2}(n)\right)^{2}I^{2}(n),\\
C_{0,1,1}(n) & = & -\left(\frac{1}{92400}J_{10}(n)+\frac{1}{2800}J_{6}(n)+\frac{1}{1680}J_{4}(n)+\frac{1}{150}J_{2}(n)\right)I(n)\\
 &  & +\left(\frac{1}{120}J_{4}(n)+\frac{1}{24}J_{2}(n)\right)\left(\frac{1}{840}J_{6}(n)+\frac{1}{60}J_{2}(n)\right)I^{2}(n),\\
C_{0,0,2}(n) & = & -\left(\frac{1}{1345344}J_{12}(n)+\frac{1}{7840}J_{6}(n)+\frac{1}{660}J_{2}(n)\right)I(n)\\
 &  & +\frac{1}{2}\left(\frac{1}{840}J_{6}(n)+\frac{1}{60}J_{2}(n)\right)^{2}I^{2}(n),\\
C_{0,3,0}(n) & = & \left(\frac{1}{2574000}J_{12}(n)+\frac{1}{42000}J_{8}(n)+\frac{17}{36000}J_{4}(n)+\frac{5}{2464}J_{2}(n)\right)I(n)\\
 &  & -\left(\frac{1}{16800}J_{8}(n)+\frac{1}{600}J_{4}(n)+\frac{5}{672}J_{2}(n)\right)\left(\frac{1}{120}J_{4}(n)+\frac{1}{24}J_{2}(n)\right)I^{2}(n)\\
 &  & +\frac{1}{6}\left(\frac{1}{120}J_{4}(n)+\frac{1}{24}J_{2}(n)\right)^{3}I^{3}(n).
\end{eqnarray*}
\end{thm}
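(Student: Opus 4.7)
The strategy parallels the proof of the previous theorem, in two stages. In the first stage I compute the coefficients $c_{0,s,t}(n)$ of $f_n(x)$ for all $(s,t)$ with $2s+3t\leq 6$; in the second stage I convert these $c$'s to the desired $C_{0,s,t}(n)$'s using $f_n = \prod_{d\mid n,\, d\neq 1} F_d^{2/I(d)}$ and M\"obius inversion, just as was done for the leading three coefficients.

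For the first stage, I substitute the ansatz
\[
f_m(x) = \sum_{(r,s,t)} c_{r,s,t}(m)\, b_2^{r} b_4^{s} b_6^{t}\, x^{d(m)-(r+2s+3t)}
\]
into the transformed recursions
\[
f_{2n+1} = \Bigl(\tfrac{n^{3}(n+2)}{2n+1}\sqrt{f_{n}^{3}f_{n+2}} - \tfrac{(n-1)(n+1)^{3}}{2n+1}\sqrt{f_{n-1}f_{n+1}^{3}}\Bigr)^{2},
\]
\[
f_{2n} = \Bigl(\tfrac{(n-1)^{2}(n+2)}{4}\sqrt{\tfrac{f_{n-1}^{2}f_{n}f_{n+2}}{f_{2}}} - \tfrac{(n-2)(n+1)^{2}}{4}\sqrt{\tfrac{f_{n-2}f_{n}f_{n+1}^{2}}{f_{2}}}\Bigr)^{2},
\]
and expand each factor to total weight $6$, assigning weights $1,2,3$ to $b_2,b_4,b_6$. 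Collecting the coefficient of each $b_2^r b_4^s b_6^t x^{\cdots}$ produces, for every $(s,t)$ with $2s+3t\leq 6$, a linear recurrence on $c_{0,s,t}(n)$ whose inhomogeneous part is a polynomial in the already-determined lower-weight coefficients. One then verifies the closed-form ansatz---linear combinations of $J_{2k}(n)$ for the Jordan totients---satisfies both the initial values $1\le n\le 4$ and the recurrence, exactly as before.

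For the second stage, set $g_n = f_n/x^{d(n)}$ and $G_d = F_d/x^{D(d)}$ so that $g_n = \prod_{d\mid n,\, d\neq 1} G_d^{2/I(d)}$. Taking formal logarithms yields
\[
\log g_n = \sum_{d\mid n,\, d\neq 1}\frac{2}{I(d)}\log G_d,
\]
which after expansion in the monomials $b_4^{s}b_6^{t}/x^{2s+3t}$ and M\"obius inversion determines the coefficients of $\log G_n$. Exponentiating then recovers the coefficients $C_{0,s,t}(n)$ as polynomials in those of $\log G_n$. The purely linear-in-$J_{2k}$ pieces (those multiplied by $I(n)$) come directly from the M\"obius-inverted log coefficients, while the quadratic terms like $\tfrac{1}{2}\bigl(\tfrac{1}{120}J_{4}+\tfrac{1}{24}J_{2}\bigr)^{2}I^{2}$ in $C_{0,2,0}$, the cross-term in $C_{0,1,1}$, and the cubic term in $C_{0,3,0}$ arise from the $\tfrac{1}{2}(\log G_n)^{2}$ and $\tfrac{1}{6}(\log G_n)^{3}$ contributions in the exponential series. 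Matching these with the stated formulas finishes the proof.

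The main obstacle is not conceptual but combinatorial bulk. The square-root expansions in the recursions couple each target $c_{0,s,t}$ to every product of lower $c_{r,s,t}$'s of combined weight at most $6$, and the exponentiation step in stage two requires tracking all partitions of the weight-vectors $(0,2,0)$, $(0,1,1)$, $(0,0,2)$, $(0,3,0)$. This bookkeeping is straightforward but sufficiently large that it is essentially only manageable with the computer algebra assistance the authors already acknowledge, and verifying the precise rational constants attached to each $J_{2k}$ is the step most prone to error.
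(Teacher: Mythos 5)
Your proof is correct in outline, but stage one takes a genuinely different route from the paper. You re-use the square-root recursion in $n$,
\[
f_{2n+1}=\Bigl(\tfrac{n^{3}(n+2)}{2n+1}\sqrt{f_{n}^{3}f_{n+2}}-\tfrac{(n-1)(n+1)^{3}}{2n+1}\sqrt{f_{n-1}f_{n+1}^{3}}\Bigr)^{2},
\]
together with the even counterpart, expanded to total weight $6$; the higher-weight coefficients then satisfy an inhomogeneous recursion in $n$ whose source terms are quadratic expressions in the lower-weight ones. The paper instead invokes McKee's recurrence, which is a recursion in the weight $2s+3t$ at \emph{fixed} $n$, with coefficients that are explicit polynomials in $n^{2}$. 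That choice makes the first stage considerably cleaner: for each target weight one simply solves a single linear step and reads off $\tilde c_{s,t}(n)$ as a closed-form polynomial in $n$, rather than guessing a closed form and verifying it against a square-root expansion where cross-terms proliferate. Your method works in principle (it is the same induction that the paper itself used for the degree-$\leq 3$ coefficients), but the quadratic coupling you describe is exactly the ``combinatorial bulk'' McKee's recurrence is designed to tame, which is presumably why the authors switched tools for this theorem. Your stage two, organizing the passage from $f_{n}$-coefficients to $F_{n}$-coefficients through formal logarithms of $g_{n}=f_{n}/x^{d(n)}$ and M\"obius inversion of $\log g_{n}=\sum_{d\mid n,\,d\neq 1}\tfrac{2}{I(d)}\log G_{d}$, is a nice and explicit way to explain where the $\tfrac12(\cdot)^{2}I^{2}$ and $\tfrac16(\cdot)^{3}I^{3}$ terms come from; the paper leaves this conversion implicit (``and then $C_{0,s,t}(n)$''), so your formulation adds useful structure even if the underlying arithmetic is identical. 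One small caveat: when you set $b_{2}=0$ to suppress the $r>0$ coefficients, you should note that this is legitimate precisely because the $C_{0,s,t}(n)$ are coefficients of monomials not involving $b_{2}$, so specializing $b_{2}=0$ in the polynomial identity loses no information about them.
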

\begin{proof}
When $n$ is odd, write
\[
\psi_{n}(x)=\sum_{\{(s,t):2s+3t\leq d(n)/2\}}\tilde{c}_{s,t}(n)b_{4}^{s}b_{6}^{t}x^{d(n)/2-(2s+3t)}.
\]
For this case, McKee \cite{8} proved a recurrence relation for $\tilde{c}_{s,t}(n)$:
\begin{eqnarray*}
 &  & (2s+3t)(2s+3t+\frac{1}{2})\tilde{c}_{s,t}(n)=\frac{1}{2}(\frac{n^{2}+3}{2}-2s-3t)(\frac{n^{2}}{6}-1+2s+3t)\tilde{c}_{s-1,t}(n)\\
 &  & -\frac{1}{4}(\frac{n^{2}+5}{2}-2s-3t)(\frac{n^{2}+3}{2}-2s-3t)\tilde{c}_{s,t-1}(n)+\frac{3}{2}(s+1)n^{2}\tilde{c}_{s+1,t-1}(n)-\frac{2}{3}(t+1)n^{2}\tilde{c}_{s-2,t+1}(n).
\end{eqnarray*}
By this formula, we can first calculate $\tilde{c}_{s,t}(n)$, and
then $C_{0,s,t}(n)$. The case when $n$ is even can be similarly
dealt with by another relation which is proved in the same paper.
\end{proof}
\begin{namedthm}{Remark}\textup{Clearly, these formulas have some
beautiful patterns, which we expect are shared by all $C_{r,s,t}(n)$.
Specifically, depending on $(r,s,t)$, each $C_{r,s,t}(n)$ is an
alternating sum of several components, each component is a product
of several subcomponents, and each subcomponent is a positive rational
linear combination of $J_{k}(n)$.}\end{namedthm}

If $j(E_{1}),j(E_{2})\neq0,1728$, it suffices to show that the map
\[
n\mapsto\left(D(n),\frac{C_{0,2,0}(n)}{C_{0,1,0}^{2}(n)},\frac{C_{0,1,1}(n)}{C_{0,1,0}(n)C_{0,0,1}(n)},\frac{C_{0,0,2}(n)}{C_{0,0,1}^{2}(n)}\right)
\]
is injective. Unfortunately, although it is supported by extensive
calculations, we fail to prove it. On the other hand, we have
\begin{thm}
Let $E_{1}$ and $E_{2}$ be two elliptic curves in generalized Weierstrass
form such that $\pi^{W}(E_{1}^{*}[n_{1}])=\pi^{W}(E_{2}^{*}[n_{2}])$
for some $n_{1}\neq n_{2}$, then $j(E_{1}),j(E_{2})\in\bar{\mathbb{Q}}\backslash\{0,1728\}$.\end{thm}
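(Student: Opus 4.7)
The plan is to translate the hypothesis into a polynomial identity in $x$ and then extract algebraic information about $j$ by matching coefficients. By part (B) of the preceding theorem, the assumption is equivalent to $F_{n_{1}}(x)=F_{n_{2}}(x)$, each side evaluated at its own curve's $b$-invariants. First I would compare top coefficients to obtain $D(n_{1})=D(n_{2})=:D$ and, using $C_{1,0,0}(n)=D(n)/12>0$, deduce $b_{2}(E_{1})=b_{2}(E_{2})$; a common coordinate change $x\mapsto x-b_{2}/12$ compatible with both Weierstrass forms then reduces to $b_{2}(E_{i})=0$. Writing $b_{4}^{(i)},b_{6}^{(i)}$ for the surviving invariants, matching $x^{D-2}$ and $x^{D-3}$ yields the single-monomial identities
\[
C_{0,1,0}(n_{1})\,b_{4}^{(1)}=C_{0,1,0}(n_{2})\,b_{4}^{(2)},\qquad C_{0,0,1}(n_{1})\,b_{6}^{(1)}=C_{0,0,1}(n_{2})\,b_{6}^{(2)}.
\]
Since $C_{0,1,0}(n),C_{0,0,1}(n)>0$, the pair $(b_{4}^{(1)},b_{4}^{(2)})$ is either $(0,0)$ or has rational nonzero ratio, and likewise for $b_{6}$. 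With $b_{2}=0$ one has $c_{4}=-24b_{4}$ and $c_{6}=-216b_{6}$, so the degenerate possibilities $b_{4}^{(i)}=0$ and $b_{6}^{(i)}=0$ are precisely $j(E_{i})=0$ and $j(E_{i})=1728$.

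Next I would exclude $j=0$: supposing $b_{4}^{(i)}=0$, the identity collapses to
\[
C_{0,0,t}(n_{1})\bigl(b_{6}^{(1)}\bigr)^{t}=C_{0,0,t}(n_{2})\bigl(b_{6}^{(2)}\bigr)^{t}\qquad\textrm{for every }t\le D/3,
\]
which forces the rationals $C_{0,0,t}(n_{2})/C_{0,0,t}(n_{1})$ to form a geometric progression in $t$. The cases $t=1,2$ alone impose $C_{0,0,2}(n_{1})/C_{0,0,1}(n_{1})^{2}=C_{0,0,2}(n_{2})/C_{0,0,1}(n_{2})^{2}$; substituting the Jordan-totient expressions for $C_{0,0,1}$ and $C_{0,0,2}$ from the preceding theorem and imposing $D(n_{1})=D(n_{2})$, one verifies that this cannot hold for $n_{1}\ne n_{2}$, with higher $t$ supplying additional constraints when the degree is large enough. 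The case $j=1728$ is dispatched by the symmetric argument in the coefficients $C_{0,s,0}(n)$.

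Finally, in the remaining case all $b_{4}^{(i)},b_{6}^{(i)}$ are nonzero, and the ratios $\alpha=b_{4}^{(2)}/b_{4}^{(1)}=C_{0,1,0}(n_{1})/C_{0,1,0}(n_{2})$ and $\beta=b_{6}^{(2)}/b_{6}^{(1)}=C_{0,0,1}(n_{1})/C_{0,0,1}(n_{2})$ lie in $\mathbb{Q}^{\times}$. Matching the coefficient of $x^{D-6}$, the lowest power containing both $b_{4}^{3}$ and $b_{6}^{2}$, gives
\[
\bigl(C_{0,3,0}(n_{1})-\alpha^{3}C_{0,3,0}(n_{2})\bigr)\bigl(b_{4}^{(1)}\bigr)^{3}+\bigl(C_{0,0,2}(n_{1})-\beta^{2}C_{0,0,2}(n_{2})\bigr)\bigl(b_{6}^{(1)}\bigr)^{2}=0.
\]
If exactly one of the two rational coefficients vanishes we force $b_{4}^{(1)}=0$ or $b_{6}^{(1)}=0$, contrary to our standing assumption; if both are nonzero, dividing yields $\bigl(b_{4}^{(1)}\bigr)^{3}/\bigl(b_{6}^{(1)}\bigr)^{2}\in\mathbb{Q}^{\times}$, and since $b_{2}=0$ makes $j=13824\,b_{4}^{3}/(8b_{4}^{3}+27b_{6}^{2})$, this gives $j(E_{1}),j(E_{2})\in\mathbb{Q}\subset\bar{\mathbb{Q}}$. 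The residual sub-case in which both of those rational coefficients vanish simultaneously is handled by iterating to $x^{D-k}$ for $k=8,9,10,12,\ldots$, each of which contains more $(s,t)$-monomials and should therefore eventually produce a nontrivial rational relation between $b_{4}^{3}$ and $b_{6}^{2}$. The hardest step is the Jordan-totient injectivity used in the $j\in\{0,1728\}$ exclusion --- this is where the explicit formulas of the preceding theorem and the analysis of $J_{k}(n)$ in the appendix are essential.
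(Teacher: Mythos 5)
Your overall architecture (reduce to $b_{2}=0$, extract the single-monomial relations for $b_{4}$ and $b_{6}$, split into the three cases $b_{4}=0$, $b_{6}=0$, and $b_{4},b_{6}\neq0$) matches the paper's. The divergence, and the genuine gap, is in how you exclude $j\in\{0,1728\}$.

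For the case $b_{4}^{(1)}=b_{4}^{(2)}=0$ you correctly reduce to the arithmetic constraint $C_{0,0,2}(n_{1})/C_{0,0,1}(n_{1})^{2}=C_{0,0,2}(n_{2})/C_{0,0,1}(n_{2})^{2}$ alongside $D(n_{1})=D(n_{2})$, and then assert that ``one verifies that this cannot hold for $n_{1}\neq n_{2}$.'' This is precisely the Jordan-totient injectivity that the paper states, immediately before the theorem, that it was unable to prove: ``it suffices to show that the map $n\mapsto\bigl(D(n),\ldots,C_{0,0,2}(n)/C_{0,0,1}^{2}(n)\bigr)$ is injective. Unfortunately, although it is supported by extensive calculations, we fail to prove it.'' Your appeal to ``higher $t$ supplying additional constraints'' is also unavailable, since the closed forms $C_{0,0,t}(n)$ are only computed for $2s+3t\leq6$. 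The paper takes a categorically different route here: when $b_{2}=b_{4}=0$ (resp.\ $b_{2}=b_{6}=0$) the curves have complex multiplication by the ring of integers of $\mathbb{Q}(\sqrt{-3})$ (resp.\ $\mathbb{Q}(\sqrt{-1})$), so the field generated over $L$ by the cubes (resp.\ squares) of the $x$-coordinates of the $n$-torsion is the ray class field $L^{n}$; a computation of conductors then shows $L^{n_{1}}=L^{n_{2}}$ forces $n_{1}=n_{2}$. That class-field-theoretic argument is the missing ingredient in your exclusion of $j=0,1728$, and without it the proof does not go through.

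A secondary issue is your handling of the $b_{4},b_{6}\neq0$ case. You isolate the $x^{D-6}$ coefficient and note that if both surviving rational coefficients vanish there, one ``should'' iterate to higher degree; this remains a hope rather than an argument, since nothing guarantees a nonvanishing coefficient appears. The paper closes this loop cleanly by invoking case (B) as a lemma: if for every $s$ one had $C_{0,s,0}(n_{1})b_{4}^{s}(E_{1})=C_{0,s,0}(n_{2})b_{4}^{s}(E_{2})$, then the auxiliary curves with the same $b_{4}$'s and $b_{6}=0$ would satisfy $\pi^{W}((E_{1}')^{*}[n_{1}])=\pi^{W}((E_{2}')^{*}[n_{2}])$, contradicting case (B). That yields an $s$ with a nonzero constant term in the resulting polynomial in $b_{6}^{2}/b_{4}^{3}$ over $\mathbb{Q}$, whence algebraicity of $j$. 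You would need to replicate this use of case (B) (or supply an equivalent) to make the last part of your argument airtight.
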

\begin{proof}
We have assumed $b_{2}=0$, so $j=0$ if and only if $b_{4}=0$, and
$j=1728$ if and only if $b_{6}=0$. Since $\pi^{W}(E_{1}^{*}[n_{1}])=\pi^{W}(E_{2}^{*}[n_{2}])$,
we have
\[
\begin{cases}
C_{0,1,0}(n_{1})b_{4}(E_{1})=C_{0,1,0}(n_{2})b_{4}(E_{2}),\\
C_{0,0,1}(n_{1})b_{6}(E_{1})=C_{0,0,1}(n_{2})b_{6}(E_{2}).
\end{cases}
\]
Thus there are three cases:\\
(A) $b_{4}(E_{1})=b_{4}(E_{2})=0$\\
We can assume $b_{6}(E_{1}),b_{6}(E_{2})\in\mathbb{Q}^{\times}$,
then $E_{1}$ and $E_{2}$ are both elliptic curves with complex multiplication
by $\mathcal{O}_{L}$, the integer ring of $L=\mathbb{Q}(\sqrt{-3})$.
By class field theory \cite{9} of imaginary quadratic fields \cite{13},
\[
L((\pi^{W}(E_{i}^{*}[n_{i}]))^{3})=L((\pi^{W}(E_{i}[n_{i}]))^{3})=L^{n_{i}}
\]
is the ray class field of $L$ for the modulus $n_{i}$. We want to
determine all $\mathfrak{m}\neq\mathfrak{n}$ such that $L^{\mathfrak{m}}=L^{\mathfrak{n}}$.
Since the common divisors of $\mathfrak{m}$ and $\mathfrak{n}$ give
the same ray class field, it is enough to assume $\mathfrak{m}|\mathfrak{n}$,
thus
\[
1=C_{L}^{\mathfrak{m}}/C_{L}^{\mathfrak{n}}=(I_{L}^{\mathfrak{m}}\cdot L^{\times}/L^{\times})/(I_{L}^{\mathfrak{n}}\cdot L^{\times}/L^{\times})=(I_{L}^{\mathfrak{m}}/(I_{L}^{\mathfrak{m}}\cap L^{\times}))/(I_{L}^{\mathfrak{n}}/(I_{L}^{\mathfrak{n}}\cap L^{\times})),
\]
so the indices
\[
[(I_{L}^{\mathfrak{m}}\cap L^{\times}):(I_{L}^{\mathfrak{n}}\cap L^{\times})]=[I_{L}^{\mathfrak{m}}:I_{L}^{\mathfrak{n}}]=\prod_{\mathfrak{p}}\left[U_{\mathfrak{p}}^{(v_{\mathfrak{p}}(\mathfrak{m}))}:U_{\mathfrak{p}}^{(v_{\mathfrak{p}}(\mathfrak{n}))}\right].
\]
In $L$, $2$ is inert, $3=\mathfrak{p}_{3}^{2}$ is ramified, and
$7=\mathfrak{p}_{7a}\mathfrak{p}_{7b}$ splits. Then
\[
I_{L}^{\mathfrak{m}}\cap L^{\times}=\begin{cases}
\mu_{6}, & \text{if }\mathfrak{m}=1,\\
\mu_{3}, & \text{if }\mathfrak{m}=\mathfrak{p}_{3},\\
\mu_{2}, & \text{if }\mathfrak{m}=2,\\
\mu_{1}, & \text{otherwise,}
\end{cases}
\]
where $\mu_{N}$ is the group of $N$-th roots of unity, and
\[
\prod_{\mathfrak{p}}\left[U_{\mathfrak{p}}^{(v_{\mathfrak{p}}(\mathfrak{m}))}:U_{\mathfrak{p}}^{(v_{\mathfrak{p}}(\mathfrak{n}))}\right]=\begin{cases}
2, & \text{if }(\mathfrak{m},\mathfrak{n})=(1,\mathfrak{p}_{3}),(2,2\mathfrak{p}_{3}),\\
3, & \text{if }(\mathfrak{m},\mathfrak{n})=(1,2),(\mathfrak{p}_{3},2\mathfrak{p}_{3}),\\
6, & \text{if }(\mathfrak{m},\mathfrak{n})=(1,2\mathfrak{p}_{3}),(1,\mathfrak{p}_{7a}),(1,\mathfrak{p}_{7b}).
\end{cases}
\]
We conclude that different moduli give different ray class fields
except for 
\[
L^{1}=L^{2}=L^{\mathfrak{p}_{3}}=L^{2\mathfrak{p}_{3}}=L^{\mathfrak{p}_{7a}}=L^{\mathfrak{p}_{7b}}.
\]
Thus the conductor of $L^{n}$ is $n/I(n)$, $L^{n_{1}}=L^{n_{2}}$
implies $n_{1}=n_{2}$.\\
(B) $b_{6}(E_{1})=b_{6}(E_{2})=0$\\
We can assume $b_{4}(E_{1}),b_{4}(E_{2})\in\mathbb{Q}^{\times}$,
then $E_{1}$ and $E_{2}$ are both elliptic curves with complex multiplication
by $\mathcal{O}_{L}$, the integer ring of $L=\mathbb{Q}(\sqrt{-1})$.
By class field theory of imaginary quadratic fields,
\[
L((\pi^{W}(E_{i}^{*}[n_{i}]))^{2})=L((\pi^{W}(E_{i}[n_{i}]))^{2})=L^{n_{i}}
\]
is the ray class field of $L$ for the modulus $n_{i}$. In $L$,
$2=\mathfrak{p}_{2}^{2}$ is ramified, and $5=\mathfrak{p}_{5a}\mathfrak{p}_{5b}$
splits. Then
\[
I_{L}^{\mathfrak{m}}\cap L^{\times}=\begin{cases}
\mu_{4}, & \text{if }\mathfrak{m}=1,\mathfrak{p}_{2},\\
\mu_{2}, & \text{if }\mathfrak{m}=2,\\
\mu_{1}, & \text{otherwise,}
\end{cases}
\]
and
\[
\prod_{\mathfrak{p}}\left[U_{\mathfrak{p}}^{(v_{\mathfrak{p}}(\mathfrak{m}))}:U_{\mathfrak{p}}^{(v_{\mathfrak{p}}(\mathfrak{n}))}\right]=\begin{cases}
1, & \text{if }\mathfrak{n}=\mathfrak{p}_{2}\mathfrak{m},\text{ where }\mathfrak{p}_{2}\nmid\mathfrak{m},\\
2, & \text{if }(\mathfrak{m},\mathfrak{n})=(1,2),(\mathfrak{p}_{2},2),(2,2\mathfrak{p}_{2}),\\
4, & \text{if }(\mathfrak{m},\mathfrak{n})=(1,2\mathfrak{p}_{2}),(\mathfrak{p}_{2},2\mathfrak{p}_{2}),(1,\mathfrak{p}_{5a}),(1,\mathfrak{p}_{5b}),\\
 & (1,\mathfrak{p}_{2}\mathfrak{p}_{5a}),(1,\mathfrak{p}_{2}\mathfrak{p}_{5b}),(\mathfrak{p}_{2},\mathfrak{p}_{2}\mathfrak{p}_{5a}),(\mathfrak{p}_{2},\mathfrak{p}_{2}\mathfrak{p}_{5b}).
\end{cases}
\]
We conclude that different moduli give different ray class fields
except for
\[
\begin{cases}
L^{1}=L^{\mathfrak{p}_{2}}=L^{2}=L^{2\mathfrak{p}_{2}}=L^{\mathfrak{p}_{5a}}=L^{\mathfrak{p}_{5b}}=L^{\mathfrak{p}_{2}\mathfrak{p}_{5a}}=L^{\mathfrak{p}_{2}\mathfrak{p}_{5b}},\\
L^{\mathfrak{m}}=L^{\mathfrak{p}_{2}\mathfrak{m}},\text{ where }\mathfrak{p}_{2}\nmid\mathfrak{m}.
\end{cases}
\]
Thus the conductor of $L^{n}$ is $n/I(n)$, $L^{n_{1}}=L^{n_{2}}$
implies $n_{1}=n_{2}$.\\
(C) $b_{4}(E_{1}),b_{4}(E_{2}),b_{6}(E_{1}),b_{6}(E_{2})\neq0$\\
The relation
\[
\sum_{0\leq k\leq s/3}C_{0,s-3k,2k}(n_{1})b_{4}^{s-3k}(E_{1})b_{6}^{2k}(E_{1})=\sum_{0\leq k\leq s/3}C_{0,s-3k,2k}(n_{2})b_{4}^{s-3k}(E_{2})b_{6}^{2k}(E_{2})
\]
implies
\[
\sum_{0\leq k\leq s/3}\left(C_{0,s-3k,2k}(n_{1})-\frac{C_{0,s-3k,2k}(n_{2})C_{0,1,0}^{s-3k}(n_{1})C_{0,0,1}^{2k}(n_{1})}{C_{0,1,0}^{s-3k}(n_{2})C_{0,0,1}^{2k}(n_{2})}\right)\left(\frac{b_{6}^{2}(E_{1})}{b_{4}^{3}(E_{1})}\right)^{k}=0.
\]
Now $b_{6}^{2}(E_{1})/b_{4}^{3}(E_{1})\in\bar{\mathbb{Q}}$ unless
all of the coefficients are zero. Part (B) implies that
\[
\sum_{\{s:2s\leq D(n_{1})\}}C_{0,s,0}(n_{1})b_{4}^{s}(E_{1})x^{D(n_{1})-2s}\neq\sum_{\{s:2s\leq D(n_{2})\}}C_{0,s,0}(n_{2})b_{4}^{s}(E_{2})x^{D(n_{2})-2s}
\]
as polynomials, so there exists some $s$ such that
\[
C_{0,s,0}(n_{1})b_{4}^{s}(E_{1})\neq C_{0,s,0}(n_{2})b_{4}^{s}(E_{2}),
\]
which in turn implies the constant term
\[
C_{0,s,0}(n_{1})-\frac{C_{0,s,0}(n_{2})C_{0,1,0}^{s}(n_{1})}{C_{0,1,0}^{s}(n_{2})}\neq0.
\]
Thus $b_{6}^{2}(E_{1})/b_{4}^{3}(E_{1})\in\bar{\mathbb{Q}}$, which
implies $j(E_{1}),j(E_{2})\in\bar{\mathbb{Q}}$.
\end{proof}

\section{\label{section 3}Intersection of Projective Torsion Points}

Now let $K$ be a number field, $G_{K}=\text{Gal}(\bar{K}/K)$ the
absolute Galois group of $K$,
\[
\chi_{K}:G_{K}\twoheadrightarrow\text{Gal}(K^{cyc}/K)\cong\text{Gal}(\mathbb{Q}^{cyc}/K\cap\mathbb{Q}^{cyc})\hookrightarrow\text{Gal}(\mathbb{Q}^{cyc}/\mathbb{Q})\cong\hat{\mathbb{Z}}^{\times}
\]
the cyclotomic character of $K$. Consider the associated Galois representation
\[
\rho_{E}:G_{K}\rightarrow\text{Aut}(E[\infty])\cong\text{GL}(2,\hat{\mathbb{Z}}).
\]
Since $\chi_{K}=\text{det}\circ\rho_{E}$, we always have $\rho_{E}(G_{K^{cyc}})\subseteq\text{SL}(2,\hat{\mathbb{Z}}).$
Zywina \cite{16} proved that if $K\neq\mathbb{Q}$, then for almost
all elliptic curves defined over $K$, this is actually an equality,
namely, $\rho_{E}(G_{K^{cyc}})=\text{SL}(2,\hat{\mathbb{Z}}).$
\begin{thm}
\label{theorem 12}Given $(E_{1},O_{1},\pi_{1})$ and $(E_{2},O_{2},\pi_{2})$,
all defined over a number field $K\neq\mathbb{Q}$, if\\
\textup{(A)} $\pi_{1}(O_{1})=\pi_{2}(O_{2})$,\\
\textup{(B)} corollary \ref{corollary 6} does not hold,\\
\textup{(C)} $\rho_{E_{1}}(G_{K^{cyc}})=\rho_{E_{2}}(G_{K^{cyc}})=\textup{SL}(2,\hat{\mathbb{Z}})$,\\
then
\[
\#\pi_{1}(E_{1}[\infty])\cap\pi_{2}(E_{2}[\infty])=1.
\]
\end{thm}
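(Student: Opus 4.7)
The plan is to argue by contradiction, leveraging condition (C) to force a tight Galois structure on any putative ``extra'' common point.

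First, I would apply the reduction from Corollary~\ref{corollary 6}: a common fractional linear transformation of $\mathbb{P}^{1}$ moves $\pi_1(O_1)=\pi_2(O_2)$ to $\infty$ and then allows each $(E_i,O_i,\pi_i)$ to be replaced by a triple in generalized Weierstrass form with $\pi_i=\pi^{W}$, without affecting the intersection. Since $\infty$ lies in both $\pi^{W}(E_i[\infty])$, the intersection contains at least one point, and only the reverse inequality needs to be proved.

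Next, suppose for contradiction that some $x\neq\infty$ lies in the intersection; write $x=\pi^{W}(P_1)=\pi^{W}(P_2)$ with $P_i\in E_i^{*}[n_i]$ and $n_i\geq 2$. Hypothesis (C) says $\rho_{E_i}(G_{K^{cyc}})=\text{SL}(2,\hat{\mathbb{Z}})$, so modulo $n_i$ the image is all of $\text{SL}_2(\mathbb{Z}/n_i)$, which acts transitively on primitive vectors of $(\mathbb{Z}/n_i)^{2}$. Passing to the $\{\pm 1\}$-quotient (i.e., along $\pi^{W}$), the $G_{K^{cyc}}$-orbit of $\pi^{W}(P_i)$ is exactly $\pi^{W}(E_i^{*}[n_i])$, of cardinality $D(n_i)$. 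Since both orbits contain $x$, they coincide, giving $\pi^{W}(E_1^{*}[n_1])=\pi^{W}(E_2^{*}[n_2])$ and in particular $D(n_1)=D(n_2)$.

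The crux is then to identify the Galois closure $M$ of $K^{cyc}(x)/K^{cyc}$ in two ways. Working inside $K^{cyc}(E_i[n_i])$, whose Galois group over $K^{cyc}$ is $\text{SL}_2(\mathbb{Z}/n_i)$ by (C), the stabilizer of $\pm P_i$ has normal core equal to $\{\pm I\}$: any matrix sending every primitive vector of $(\mathbb{Z}/n_i)^{2}$ to $\pm$ itself must send $e_1,e_2$ each to $\pm$ themselves, and the determinant condition then forces $\pm I$. Hence $\text{Gal}(M/K^{cyc})\cong\text{SL}_2(\mathbb{Z}/n_i)/\{\pm I\}$, and a short count using $|\text{SL}_2(\mathbb{Z}/n)|=n\cdot J_2(n)$ together with the definition $D(n)=\tfrac{1}{2}J_2(n)I(n)$ gives $|\text{Gal}(M/K^{cyc})|=n_i\cdot D(n_i)$ uniformly for all $n_i\geq 2$ (with the small-case check $n_i=2$, where $-I=I$, handled by the factor $I(n)$). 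Since $M$ is intrinsic, the two computations must agree: $n_1 D(n_1)=n_2 D(n_2)$. Combined with $D(n_1)=D(n_2)\neq 0$, this forces $n_1=n_2$.

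Having reduced to $n_1=n_2$, the equality $\pi^{W}(E_1^{*}[n_1])=\pi^{W}(E_2^{*}[n_1])$ is precisely condition (B) of Corollary~\ref{corollary 6}, contradicting hypothesis (B) of the theorem. The one genuinely technical step is the identification of the normal core together with the uniform identity $|\text{SL}_2(\mathbb{Z}/n)/\{\pm I\}|=n\cdot D(n)$; everything else is either an orbit-stabilizer count or a direct invocation of Corollary~\ref{corollary 6}, and I do not expect any significant obstacle beyond that.
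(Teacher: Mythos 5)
Your proof is correct and follows essentially the same route as the paper's: reduce to Weierstrass form, use hypothesis (C) to identify the $G_{K^{cyc}}$-orbit of a putative common point with $\pi^{W}(E_i^{*}[n_i])$, identify the Galois group of the splitting field with $\text{PSL}_2(\mathbb{Z}/n_i\mathbb{Z})$, and use the intrinsic nature of that extension to force $n_1=n_2$, contradicting (B). The only cosmetic difference is that the paper directly computes the intrinsic quantity $[K^{cyc}(\text{orbit of }a):K^{cyc}(a)]=\#\text{PSL}_2(\mathbb{Z}/n_i\mathbb{Z})/D(n_i)=n_i$ in one step, whereas you first extract $D(n_1)=D(n_2)$ from the coincidence of orbits and then compare $|\text{Gal}(M/K^{cyc})|=n_iD(n_i)$; you also spell out the normal-core computation $\{\pm I\}$ that the paper leaves implicit when writing the exact sequence onto $\text{PSL}_2$.
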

\begin{proof}
Suppose that there exists $a\in\pi_{1}(E_{1}^{*}[n_{1}])\cap\pi_{2}(E_{2}^{*}[n_{2}])$
for some $n_{1},n_{2}>1$. By assumption (C), $G_{K^{cyc}}$ transitively
acts on $\pi_{i}(E_{i}^{*}[n_{i}])$, which is therefore the orbit
of $a$, and the degree of $a$ is given by $D(n_{i})$. We also have
the exact sequence
\[
1\rightarrow G_{K^{cyc}(\pi_{i}(E_{i}^{*}[n_{i}]))}\rightarrow G_{K^{cyc}}\rightarrow\text{PSL}(2,\mathbb{Z}/n_{i}\mathbb{Z})\rightarrow1,
\]
as a consequence,
\[
[K^{cyc}(\text{orbit of }a):K^{cyc}(a)]=\frac{[G_{K^{cyc}}:G_{K^{cyc}(\text{orbit of }a)}]}{[K^{cyc}(a):K^{cyc}]}=\frac{\#\text{PSL}(2,\mathbb{Z}/n_{i}\mathbb{Z})}{D(n_{i})}=n_{i},
\]
which implies that $n_{1}=n_{2}$, hence contradicts (A) and (B).
\end{proof}
For any $\sigma\in G_{K}$, $\sigma(\sqrt{\Delta})=\epsilon(\rho_{E}(\sigma))\sqrt{\Delta}$,
where
\[
\epsilon:\text{GL}(2,\hat{\mathbb{Z}})\rightarrow\text{GL}(2,\mathbb{Z}/2\mathbb{Z})\rightarrow\{\pm1\}
\]
is the signature character. If $K=\mathbb{Q}$, then $\sqrt{\Delta}\in\mathbb{Q}^{ab}=\mathbb{Q}^{cyc}$,
so $\rho_{E}(G_{\mathbb{Q}^{cyc}})\subseteq\text{SL}(2,\hat{\mathbb{Z}})\cap\text{ker}(\epsilon)$,
a subgroup of index $2$ in $\text{SL}(2,\hat{\mathbb{Z}})$. Jones
\cite{6} proved that for almost all elliptic curves defined over
$\mathbb{Q}$, this is actually an equality, namely, $\rho_{E}(G_{\mathbb{Q}^{cyc}})=\text{SL}(2,\hat{\mathbb{Z}})\cap\text{ker}(\epsilon)$.
\begin{thm}
\label{theorem 13}Given $(E_{1},O_{1},\pi_{1})$ and $(E_{2},O_{2},\pi_{2})$,
all defined over $\mathbb{Q}$, if\\
\textup{(A)} $\pi_{1}(O_{1})=\pi_{2}(O_{2})$,\\
\textup{(B)} corollary \ref{corollary 6} does not hold,\\
\textup{(C)} $\rho_{E_{1}}(G_{K^{cyc}})=\rho_{E_{2}}(G_{K^{cyc}})=\textup{SL}(2,\hat{\mathbb{Z}})\cap\textup{ker}(\epsilon)$,\\
then
\[
\#\pi_{1}(E_{1}[\infty])\cap\pi_{2}(E_{2}[\infty])=1.
\]
\end{thm}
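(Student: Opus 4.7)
The plan is to follow the proof of Theorem~\ref{theorem 12} almost verbatim, the only new ingredient being that over $\mathbb{Q}$ the image $\rho_{E_i}(G_{\mathbb{Q}^{cyc}})$ is the index-two subgroup $\text{SL}(2,\hat{\mathbb{Z}}) \cap \ker(\epsilon)$ rather than all of $\text{SL}(2,\hat{\mathbb{Z}})$. Suppose for contradiction that $a \in \pi_1(E_1^*[n_1]) \cap \pi_2(E_2^*[n_2])$ for some $n_1,n_2 > 1$. Since $\epsilon$ factors through reduction mod $2$, the image of $\rho_{E_i}(G_{\mathbb{Q}^{cyc}})$ in $\text{SL}(2,\mathbb{Z}/n_i\mathbb{Z})$ is the full group when $n_i$ is odd and is the index-two subgroup $\text{SL}(2,\mathbb{Z}/n_i\mathbb{Z}) \cap \ker(\epsilon_{n_i})$ when $n_i$ is even.

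The first task is to verify that this (possibly smaller) image still acts transitively on $\pi_i(E_i^*[n_i])$. For odd $n_i$ nothing changes. For even $n_i$, the stabilizer of the primitive vector $(1,0) \in (\mathbb{Z}/n_i\mathbb{Z})^2$ under $\text{SL}(2,\mathbb{Z}/n_i\mathbb{Z})$ is the upper unipotent subgroup, which contains a matrix whose above-diagonal entry is odd and whose mod-$2$ reduction is a transposition, hence lies outside $\ker(\epsilon_{n_i})$. Thus any element of $\text{SL}(2,\mathbb{Z}/n_i\mathbb{Z})$ can be right-multiplied by such a stabilizer element to land inside $\ker(\epsilon_{n_i})$, and transitivity on primitive vectors — and a fortiori on $\pi_i(E_i^*[n_i])$ — is inherited by the kernel subgroup.

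With transitivity in hand, $\pi_1(E_1^*[n_1]) = \pi_2(E_2^*[n_2]) = G_{\mathbb{Q}^{cyc}}\cdot a$, so $D(n_1) = D(n_2) = [\mathbb{Q}^{cyc}(a):\mathbb{Q}^{cyc}]$. A direct count using $|\text{SL}(2,\mathbb{Z}/n\mathbb{Z})| = n J_2(n)$ and noting that $\pm I$ acts trivially on $\pi_i(E_i^*[n_i])$ (with $\{\pm I\}$ of order $I(n)$ in the image) yields
\[
[\mathbb{Q}^{cyc}(\pi_i(E_i^*[n_i])):\mathbb{Q}^{cyc}(a)] = \begin{cases} 1, & n_i = 2, \\ n_i, & n_i \text{ odd}, \\ n_i/2, & n_i \text{ even and } n_i > 2. \end{cases}
\]
Because this ratio depends only on the orbit $G_{\mathbb{Q}^{cyc}}\cdot a$, the right-hand sides agree for $i = 1,2$. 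Combined with $D(n_1) = D(n_2)$, a short case analysis forces $n_1 = n_2$: equal parities give the conclusion directly from the ratio equality, while mixed parities would force $n_2 = 2 n_1$ with $n_1$ odd, and then the identity $J_2(2n_1) = 3 J_2(n_1)$ yields $D(n_2) = 3 D(n_1) \neq D(n_1)$. Hence $n_1 = n_2$, so $\pi_1(E_1^*[n_1]) = \pi_2(E_2^*[n_1])$ — condition (B) of Corollary~\ref{corollary 6} — contradicting hypothesis (B). The main obstacle is the transitivity verification, along with the bookkeeping in the $n_i = 2$ case where $-I = I$ collapses the usual $I(n)$ factor; everything else is a straightforward recasting of Theorem~\ref{theorem 12}.
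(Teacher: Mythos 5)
Your proof follows the paper's route exactly: the same exact-sequence degree count giving $[\mathbb{Q}^{cyc}(\text{orbit of }a):\mathbb{Q}^{cyc}(a)]=n_i$ for $n_i$ odd and $n_i/2$ for $n_i$ even, and the same mixed-parity contradiction via $D(2m)=3D(m)$ for $m$ odd; the explicit transitivity verification for the index-two subgroup $\text{SL}(2,\mathbb{Z}/n\mathbb{Z})\cap\ker(\epsilon_n)$ is a detail the paper leaves implicit under ``nearly the same,'' but it is genuinely needed and your argument (right-multiply by a sign-$(-1)$ element of the upper unipotent stabilizer) is correct. One small slip: $\{\pm I\}$ has order $2/I(n)$ in $\text{SL}(2,\mathbb{Z}/n\mathbb{Z})$, not $I(n)$ (it is trivial for $n=2$ and of order two for $n>2$), but since $D(n)=\frac{1}{2}J_2(n)I(n)$ the $I(n)$-dependence cancels and your final index formula is unaffected.
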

\begin{proof}
The proof is nearly the same as the case $K\neq\mathbb{Q}$, except
that
\[
[\mathbb{Q}^{cyc}(\text{orbit of }a):\mathbb{Q}^{cyc}(a)]=\begin{cases}
n_{i}, & \text{if }n_{i}\text{ is odd},\\
n_{i}/2, & \text{if }n_{i}\text{ is even}.
\end{cases}
\]
If $n_{1}\neq n_{2}$, then $n_{1}=n_{2}/2$ is odd, but which implies
$D(n_{1})=D(n_{2})/3$, a contradiction.
\end{proof}
\begin{namedthm}{Remark}\textup{If question \ref{question 8} has
an affirmative answer, then the assumption (C) in theorem \ref{theorem 12}
and \ref{theorem 13} can be weakened by assuming that $G_{K}$ acts
on $\pi_{i}(E_{i}^{*}[n_{i}])$ transitively. This suggests that our
conjecture \ref{conjeture 2}, apparently an unlikely intersection
type problem \cite{15}, might be somewhat related to Serre's uniformity
conjecture \cite{11}.}\end{namedthm}

With the condition $\pi_{1}(O_{1})=\pi_{2}(O_{2})$ dropped, more
intersection points can be obtained. We begin with the classical results
for $3$-torsion points and $4$-torsion points.
\begin{prop}
\label{proposition 15}For any elliptic curve $E$ defined over $K$,
there exists an even morphism $\pi\in\bar{K}(E)$ of degree $2$ such
that $\pi(E^{*}[3])=\{\infty,1,\rho,\rho^{2}\}$, where $\rho$ is
a primitive cube root of unity.\end{prop}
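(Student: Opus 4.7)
The plan is to observe that any even degree-$2$ morphism $\pi:E\to\mathbb{P}^{1}$ factors as $\sigma\circ\pi^{W}$ for some $\sigma\in\mathrm{Aut}(\mathbb{P}^{1})$ over $\bar{K}$: after fixing any Weierstrass model of $E$ over $\bar{K}$, both $\pi$ and $\pi^{W}$ are the quotient of $E$ by $[-1]$, so they differ by a unique automorphism of the target $\mathbb{P}^{1}$. Thus it suffices to find $\sigma\in\mathrm{Aut}(\mathbb{P}^{1})$ carrying the unordered $4$-tuple $\pi^{W}(E^{*}[3])$ onto $\{\infty,1,\rho,\rho^{2}\}$. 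By Section~\ref{section 2}, this $4$-tuple is the root set of the primitive $3$-division polynomial
\[
\psi_{3}(x)=3x^{4}+b_{2}x^{3}+3b_{4}x^{2}+3b_{6}x+b_{8},\qquad 4b_{8}=b_{2}b_{6}-b_{4}^{2}.
\]

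Next I would invoke the classical invariant theory of binary quartics. Two unordered $4$-tuples of distinct points on $\mathbb{P}^{1}$ lie in the same $\mathrm{Aut}(\mathbb{P}^{1})$-orbit iff their associated binary quartics share the same $j$-invariant, and in particular the equation $I=0$, where $I=a_{0}a_{4}-4a_{1}a_{3}+3a_{2}^{2}$ is the degree-$2$ invariant of $a_{0}X^{4}+4a_{1}X^{3}Y+6a_{2}X^{2}Y^{2}+4a_{3}XY^{3}+a_{4}Y^{4}$, cuts out the single equianharmonic orbit over $\bar{K}$. The target $\{\infty,1,\rho,\rho^{2}\}$ is represented by $X^{3}Y-Y^{4}$, which manifestly has $I=0$, so it is enough to verify $I(\psi_{3})=0$.

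This final step is immediate: reading off $(a_{0},a_{1},a_{2},a_{3},a_{4})=(3,b_{2}/4,b_{4}/2,3b_{6}/4,b_{8})$ gives
\[
I(\psi_{3})=3b_{8}-\tfrac{3}{4}b_{2}b_{6}+\tfrac{3}{4}b_{4}^{2}=\tfrac{3}{4}\bigl(4b_{8}-b_{2}b_{6}+b_{4}^{2}\bigr)=0,
\]
using precisely the Weierstrass identity. Consequently there exists $\sigma\in\mathrm{Aut}(\mathbb{P}^{1})$ with $\sigma(\pi^{W}(E^{*}[3]))=\{\infty,1,\rho,\rho^{2}\}$, and $\pi:=\sigma\circ\pi^{W}$ is the desired morphism.

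There is no genuine obstacle: the whole argument collapses to the observation that the very identity $4b_{8}=b_{2}b_{6}-b_{4}^{2}$ forces the quartic invariant $I$ of $\psi_{3}$ to vanish. The only minor conceptual checks are that $\psi_{3}$ has four distinct roots (the eight nontrivial $3$-torsion points split into four $\pm$-pairs, placing us in the stable locus of quartic invariant theory) and that $I=0$ really does single out one orbit over $\bar{K}$ (the six cross-ratios of an equianharmonic unordered $4$-tuple collapse to $\{-\rho,-\rho^{2}\}$, both arising from reorderings of $\{\infty,1,\rho,\rho^{2}\}$).
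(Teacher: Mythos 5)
Your argument is correct, but it is genuinely different from the paper's. The paper proceeds by explicit normal form: it takes the Hesse pencil $E_\lambda\colon x^3+y^3+z^3=3\lambda xyz$, chooses the origin $(1:-1:0)$, exhibits all nine $3$-torsion points explicitly, and checks that the projection $(x:y:z)\mapsto -(x+y)/z$ sends $E_\lambda^*[3]$ to $\{\infty,1,\rho,\rho^2\}$; the general case then follows because every elliptic curve is $\bar{K}$-isomorphic to some $E_\lambda$. Your route instead stays in the Weierstrass model and invokes the classical invariant theory of binary quartics: after the (correct) reduction to comparing $\pi^W(E^*[3])$ with the target up to $\mathrm{Aut}(\mathbb{P}^1)$, you observe that the degree-$2$ invariant of $\psi_3 = 3x^4 + b_2 x^3 + 3b_4 x^2 + 3b_6 x + b_8$ is $\tfrac{3}{4}\left(4b_8 - b_2 b_6 + b_4^2\right)=0$ precisely because of the universal Weierstrass identity $4b_8=b_2 b_6-b_4^2$, which places the $4$-tuple in the equianharmonic $\mathrm{PGL}_2(\bar{K})$-orbit. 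I checked the computation and the supporting observations (the eight primitive $3$-torsion points split into four $\pm$-pairs giving four distinct roots; $I=0$ together with $\Delta\neq0$ determines a single orbit over $\bar{K}$): all are correct. The paper's proof is more hands-on and also furnishes the explicit family $(E_\lambda,O_\lambda,\pi_\lambda)$ that is reused later in the corollary following Proposition~\ref{proposition 16}; your proof is more conceptual and explains \emph{why} the equianharmonic configuration arises, by tracing it directly to a built-in algebraic identity of the Weierstrass form rather than to a coincidence in a chosen pencil.
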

\begin{proof}
Consider the family $E_{\lambda}:x^{3}+y^{3}+z^{3}=3\lambda xyz$,
which is an elliptic curve provided $\lambda^{3}\neq1$. Its $3$-torsion
points are
\[
\begin{Bmatrix}(1:-1:0), & (1:-\rho:0), & (1:-\rho^{2}:0),\\
(0:1:-1), & (0:1:-\rho), & (0:1:-\rho^{2}),\\
(-1:0:1), & (-\rho:0:1), & (-\rho^{2}:0:1).
\end{Bmatrix}
\]
If we take the origin to be $O_{\lambda}=(1:-1:0)$, then the projection
map $\pi_{\lambda}:E_{\lambda}\rightarrow\mathbb{P}^{1},(x:y:z)\mapsto-(x+y)/z$
maps $E_{\lambda}^{*}[3]$ to the desired set. Since every elliptic
curve defined over $K$ can be transformed to some $E_{\lambda}$
via an isomorphism defined over $\bar{K}$, the conclusion follows.\end{proof}
\begin{prop}
\label{proposition 16}For any elliptic curve $E$ defined over $K$,
there exists an even morphism $\pi\in\bar{K}(E)$ of degree $2$ such
that $\pi(E^{*}[4])=\{0,\infty,\pm1,\pm i\}$, where $i$ is a primitive
fourth root of unity.\end{prop}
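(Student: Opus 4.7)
The plan is to reduce to short Weierstrass form over $\bar{K}$ and construct $\pi$ as a composition of the Weierstrass $x$-coordinate with an explicit M\"obius transformation. By the reduction described after Theorem~\ref{theorem 5}, one may assume $E: y^2 = f(x) = (x - e_1)(x - e_2)(x - e_3)$ with $e_1 + e_2 + e_3 = 0$, and take $\pi_0(x,y) = x$, which is even of degree $2$. From the formula for $F_4(x)$ given earlier (equivalently, the duplication formula) the image $\pi_0(E^*[4])$ is the six-element set $\{e_i \pm \alpha_i : i = 1, 2, 3\}$, where $\alpha_i$ is a choice of square root of $f'(e_i) = (e_i - e_j)(e_i - e_k)$.

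The first step is to apply the M\"obius transformation $\mu(z) = (z - (e_1 + \alpha_1))/(z - (e_1 - \alpha_1))$, which sends the pair $\{e_1 + \alpha_1,\, e_1 - \alpha_1\}$ to $\{0, \infty\}$. A short direct calculation, using the easily checked identity $\alpha_i^2 + \alpha_j^2 = (e_i - e_j)^2$, shows that for $i = 2, 3$ the images $\mu(e_i + \alpha_i)$ and $\mu(e_i - \alpha_i)$ sum to zero; hence $\mu(\{e_i + \alpha_i, e_i - \alpha_i\}) = \{+u_i, -u_i\}$ for some $u_i \in \bar{K}$ satisfying $u_i^2 = ((e_i - e_1) - \alpha_1)/((e_i - e_1) + \alpha_1)$.

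The crucial identity is $u_2^2 + u_3^2 = 0$. Placing the two fractions over a common denominator, the numerator is proportional to $(e_2 - e_1)(e_3 - e_1) - \alpha_1^2$, which vanishes because $(e_2 - e_1)(e_3 - e_1) = f'(e_1) = \alpha_1^2$. Therefore $u_3 = \pm i \cdot u_2$. Composing with the M\"obius transformation $\nu(z) = z/u_2$, which fixes $\{0, \infty\}$, sends $\{\pm u_2, \pm u_3\}$ onto $\{\pm 1, \pm i\}$. Setting $\pi := \nu \circ \mu \circ \pi_0 \in \bar{K}(E)$ then gives an even degree-$2$ morphism with $\pi(E^*[4]) = \{0, \infty, \pm 1, \pm i\}$.

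The main obstacle is establishing the identity $u_2^2 + u_3^2 = 0$. It is only a brief algebraic manipulation once the formulas for $u_i^2$ are in hand, but it is precisely the hidden symmetry that forces the M\"obius equivalence class of $\pi_0(E^*[4])$ to be independent of the $j$-invariant of $E$; without it, one would na\"ively expect a nontrivial moduli obstruction coming from the three independent M\"obius invariants of an unordered sextuple on $\mathbb{P}^1$, and the construction would succeed only for special $E$.
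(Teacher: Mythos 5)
Your proof is correct, and it takes a genuinely different route from the paper. The paper invokes the quartic family $E_{\delta}:y^{2}=x^{4}-(\delta^{2}+1/\delta^{2})x^{2}+1$ with origin $O_{\delta}=(\delta,0)$ and projection $\pi_{\delta}(x,y)=x$; there the 2-torsion points $(\pm\delta^{\pm1},0)$ induce the automorphisms $x\mapsto\pm x^{\pm1}$ of $\mathbb{P}^{1}$, and the three nontrivial ones have fixed-point pairs $\{0,\infty\}$, $\{\pm1\}$, $\{\pm i\}$, which one recognizes immediately as $\pi_{\delta}(E_{\delta}^{*}[4])$ since a primitive 4-torsion point is taken to itself (up to sign) by translation by the 2-torsion point it doubles to. Your argument instead stays in the cubic Weierstrass model $y^{2}=(x-e_{1})(x-e_{2})(x-e_{3})$, reads off $\pi_{0}(E^{*}[4])=\{e_{i}\pm\alpha_{i}\}$ from the halving formula, and then pushes the six-point configuration into position by hand with a M\"obius map, the whole thing hinging on the two algebraic identities $\alpha_{i}^{2}+\alpha_{j}^{2}=(e_{i}-e_{j})^{2}$ and $u_{2}^{2}+u_{3}^{2}=0$. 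Both of your identities check out, and I verified $u_{i}^{2}=\bigl((e_{i}-e_{1})-\alpha_{1}\bigr)/\bigl((e_{i}-e_{1})+\alpha_{1}\bigr)$ as well; the pairwise distinctness of the $e_{i}$ rules out any degenerate vanishing of numerators or denominators. The trade-off is that the paper's approach is shorter and makes the octahedral symmetry transparent (it is literally the automorphism group of the configuration acting by signed inversion), whereas your approach is more elementary and self-contained, requiring no prior knowledge of the $E_{\delta}$ normal form, and it makes explicit the algebraic coincidence that kills the expected moduli dependence, which is a point the paper's proof leaves implicit.
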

\begin{proof}
Consider the family $E_{\delta}:y^{2}=x^{4}-(\delta^{2}+1/\delta^{2})x^{2}+1$,
which is a curve of genus $1$ with a unique singularity at $(0:1:0)$
provided $\delta^{4}\neq0,1$. Its $2$-torsion points $(\pm\delta^{\pm1},0)$
respectively induce $x\mapsto\pm x^{\pm1}\in\text{Aut}(\mathbb{\mathbb{P}}^{1})$
if we take $O_{\delta}=(\delta,0)$, and $\pi_{\delta}:E_{\delta}\rightarrow\mathbb{P}^{1},(x,y)\mapsto x$.
The fixed points of those nontrivial ones, $\{0,\infty\}$, $\{\pm1\},$
and $\{\pm i\}$, therefore constitute the collection of $\pi_{\delta}(E_{\delta}^{*}[4])$.
Since every elliptic curve defined over $K$ can be transformed to
some $E_{\delta}$ via a birational isomorphism defined over $\bar{K}$,
the conclusion follows.
\end{proof}
Proposition \ref{proposition 15} and \ref{proposition 16} indicate
that for any elliptic curve, the projective $3$-torsion points are
equivalent to the vertices of a regular tetrahedron, while the projective
$4$-torsion points are equivalent to the vertices of a regular octahedron.
It is therefore tempting to guess the projective $5$-torsion points
are equivalent to the vertices of a regular icosahedron. However,
this is not the case. Following Klein \cite{7}, consider the family
\begin{eqnarray*}
 &  & E_{s,t}:y^{2}=x^{3}-3(s^{20}+228s^{15}t^{5}+494s^{10}t^{10}-228s^{5}t^{15}+t^{20})x\\
 &  & +2(s^{30}-522s^{25}t^{5}-10005s^{20}t^{10}-10005s^{10}t^{20}+522s^{5}t^{25}+t^{30}).
\end{eqnarray*}
Its projective $5$-torsion points can be explicitly expressed as
\begin{eqnarray*}
x_{\infty}^{+} & = & -\left[\left(5+\frac{6}{\sqrt{5}}\right)s^{10}-\frac{66}{\sqrt{5}}s^{5}t^{5}+\left(5-\frac{6}{\sqrt{5}}\right)t^{10}\right],\\
x_{\infty}^{-} & = & -\left[\left(5-\frac{6}{\sqrt{5}}\right)s^{10}+\frac{66}{\sqrt{5}}s^{5}t^{5}+\left(5+\frac{6}{\sqrt{5}}\right)t^{10}\right],\\
x_{k}^{+} & = & (s^{10}+30s^{5}t^{5}+t^{10})+(12s^{9}t+24s^{4}t^{6})\omega^{k}+(24s^{8}t^{2}-12s^{3}t^{7})\omega^{2k}\\
 &  & +(36s^{7}t^{3}+12s^{2}t^{8})\omega^{3k}+60s^{6}t^{4}\omega^{4k},\\
x_{k}^{-} & = & (s^{10}-30s^{5}t^{5}+t^{10})+(24s^{6}t^{4}-12st^{9})\omega^{k}+(12s^{7}t^{3}+24s^{2}t^{8})\omega^{2k}\\
 &  & +(12s^{8}t^{2}-36s^{3}t^{7})\omega^{3k}+60s^{4}t^{6}\omega^{4k},
\end{eqnarray*}
where $\omega$ is a primitive fifth root of unity, and $k\in\mathbb{Z}/5\mathbb{Z}$.
Since the cross ratio
\[
\frac{(x_{\infty}^{+}-x_{0}^{-})(x_{0}^{+}-x_{\infty}^{-})}{(x_{\infty}^{+}-x_{\infty}^{-})(x_{0}^{+}-x_{0}^{-})}=\frac{(s^{2}-st+\frac{3-\sqrt{5}}{2}t^{2})(s^{2}+\frac{3+\sqrt{5}}{2}st+\frac{3+\sqrt{5}}{2}t^{2})}{\sqrt{5}st(s^{2}-st-t^{2})}
\]
is not a constant, the projective $5$-torsion points are not equivalent
to any fixed collection of $12$ points.

\medskip{}

Although any single $\pi(E^{*}[2])$, $\pi(E^{*}[3])$, or $\pi(E^{*}[4])$
is insufficient to determine $\pi(E[\infty])$, any pair of them can
do so.
\begin{cor}
Given $(E_{1},O_{1},\pi_{1})$ and $(E_{2},O_{2},\pi_{2})$, if\\
\textup{(A)} $\pi_{1}(E_{1}^{*}[2])=\pi_{2}(E_{2}^{*}[2])$ and $\pi_{1}(E_{1}^{*}[3])=\pi_{2}(E_{2}^{*}[3])$,
or\\
\textup{(B)} $\pi_{1}(E_{1}^{*}[2])=\pi_{2}(E_{2}^{*}[2])$ and $\pi_{1}(E_{1}^{*}[4])=\pi_{2}(E_{2}^{*}[4])$,
or\\
\textup{(C)} $\pi_{1}(E_{1}^{*}[3])=\pi_{2}(E_{2}^{*}[3])$ and $\pi_{1}(E_{1}^{*}[4])=\pi_{2}(E_{2}^{*}[4])$,\\
then $\pi_{1}(O_{1})=\pi_{2}(O_{2})$. In particular, corollary \ref{corollary 6}
holds.\end{cor}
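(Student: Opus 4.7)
The plan is to handle each of (A), (B), (C) by reducing to an explicit polynomial computation in a normal form. In cases (A) and (C) I apply a common M\"obius transformation to $\mathbb{P}^{1}$ carrying the tetrahedron $\pi_{1}(E_{1}^{*}[3])=\pi_{2}(E_{2}^{*}[3])$ onto the canonical configuration $\{\infty,1,\rho,\rho^{2}\}$ of proposition \ref{proposition 15}, and in case (B) I normalize the octahedron $\pi_{1}(E_{1}^{*}[4])=\pi_{2}(E_{2}^{*}[4])$ onto $\{0,\infty,\pm 1,\pm i\}$ as in proposition \ref{proposition 16}. After this normalization each $E_{i}$ can be identified with a member of the Hessian family $E_{\lambda_{i}}$ (or of the quartic family $E_{\delta_{i}}$) in such a way that $\pi_{i}(O_{i})$ equals the family parameter $\lambda_{i}$ (resp.\ $\delta_{i}$). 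Equality of the other torsion image then becomes an explicit equation forcing the two parameters to coincide, after which corollary \ref{corollary 6} yields the ``in particular'' clause with $\{n_{1},n_{2}\}=\{2,3\}$, $\{2,4\}$, or $\{3,4\}$ respectively.

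For case (B) the computation is immediate from proposition \ref{proposition 16}: $\pi_{\delta}(E_{\delta}^{*}[2])=\{-\delta,1/\delta,-1/\delta\}$, and since $\delta^{4}\neq 1$ the only pair of negatives inside this three-element set is $\{\pm 1/\delta\}$, so equality of the 2-torsion images for $\delta_{1}$ and $\delta_{2}$ identifies both this pair and the remaining singleton, forcing $\delta_{1}=\delta_{2}$. For case (A) I would work on the Hessian model $E_{\lambda}:x^{3}+y^{3}+z^{3}=3\lambda xyz$ with $\pi_{\lambda}(x:y:z)=-(x+y)/z$: setting $t=-(x+y)/z$, $u=x/z$, $v=y/z$, the Hessian equation yields $uv=(1-t^{3})/(3(\lambda-t))$, so the discriminant $(u-v)^{2}=t^{2}-4uv$ of the fiber quadratic vanishes exactly on $t^{3}+3\lambda t^{2}-4$; hence $\pi_{\lambda}(E_{\lambda}^{*}[2])$ is the root set of this monic cubic and matching for $\lambda_{1},\lambda_{2}$ forces $\lambda_{1}=\lambda_{2}$.

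Case (C) is analogous in structure: after normalizing the 3-torsion I need to compute $\pi_{\lambda}(E_{\lambda}^{*}[4])$ as a monic sextic in $t$ with coefficients in $\mathbb{Q}(\lambda)$, most efficiently by passing to Weierstrass form via the argument preceding corollary \ref{corollary 6} and invoking the primitive division polynomial $F_{4}$ from section \ref{section 2}, then pulling back to the $t$-coordinate. One subtlety present in all three cases is that the common M\"obius normalization is defined only up to the $A_{4}$ (resp.\ $S_{4}$) stabilizer of the canonical configuration, so to close the argument I need the equivariance $P(\pi_{\lambda}(E_{\lambda}^{*}[n]))=\pi_{P(\lambda)}(E_{P(\lambda)}^{*}[n])$ for $P$ in this stabilizer. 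This equivariance holds by the symmetry of the family and is straightforward to check on generators; for case (A), one verifies directly that both $z\mapsto\rho z$ and $z\mapsto(z+2)/(z-1)$ carry $t^{3}+3\lambda t^{2}-4$ to $t^{3}+3P(\lambda)t^{2}-4$. The main obstacle is therefore the explicit sextic computation in case (C); cases (A) and (B) reduce to one- or two-line checks.
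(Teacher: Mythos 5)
Your strategy matches the paper's in outline for cases (A) and (B), and your derivations there are correct and even fill in details the paper leaves out: the $uv=(1-t^{3})/(3(\lambda-t))$ computation cleanly produces the cubic $t^{3}+3\lambda t^{2}-4$, and your observation that $\{\pm 1/\delta\}$ is the unique pair of negatives in $\{-\delta,1/\delta,-1/\delta\}$ when $\delta^{4}\neq 1$ is a tidy way to pin down $\delta$.

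However, two points need attention. First, the passage ``each $E_{i}$ can be identified with a member of the Hessian family $E_{\lambda_{i}}$'' is under-justified, and the equivariance claim you raise to repair it is a detour: one does not need to track the $A_{4}$ (resp.\ $S_{4}$) ambiguity at all. A single M\"obius map is applied to both curves simultaneously, so the normalized $\lambda_{i}$'s are equal if and only if the original $\pi_{i}(O_{i})$'s are. What is actually needed is the statement that the \emph{abstract} curve $(E,O,\pi)$ with $\pi(E^{*}[3])=\{\infty,1,\rho,\rho^{2}\}$ and $\pi(O)=\lambda$ has the same projective two-torsion as the explicit model $(E_{\lambda},O_{\lambda},\pi_{\lambda})$ --- since they share $\pi(O)$ and a full primitive torsion image, this is precisely an application of corollary \ref{corollary 6}(B)$\Rightarrow$(A). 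The paper invokes corollary \ref{corollary 6} at exactly this step; you only cite it at the very end, so the middle of your argument has a logical gap which the equivariance remark does not fill (and would in any case be the wrong tool).

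Second, for case (C) you choose to normalize the 3-torsion and compute $\pi_{\lambda}(E_{\lambda}^{*}[4])$ as a sextic in $t$, whereas the paper normalizes the 4-torsion onto $\{0,\infty,\pm1,\pm i\}$ and computes the much smaller quartic $x^{4}+2\delta x^{3}-(2/\delta)x-1$ for $\pi_{\delta}(E_{\delta}^{*}[3])$, obtained by pulling back $F_{3}$ along the birational map to the nonsingular Weierstrass model $E_{\delta}^{ns}$. You acknowledge the sextic as ``the main obstacle'' and do not carry it out, so as written case (C) is unproved. Switching to the paper's normalization would both shrink the polynomial and let you reuse the $E_{\delta}$ set-up you already established in case (B).
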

\begin{proof}
(A) It suffices to show that $\pi(E^{*}[2])$ and $\pi(E^{*}[3])$
determine $\pi(O)$. By proposition \ref{proposition 15}, we can
assume $\pi(E^{*}[3])=\{\infty,1,\rho,\rho^{2}\}$. Let $\lambda=\pi(O)$,
and consider $(E_{\lambda},O_{\lambda},\pi_{\lambda})$. Since
\[
\pi_{\lambda}:E_{\lambda}\rightarrow\mathbb{P}^{1},(x:y:z)\mapsto-\frac{x+y}{z}=\frac{z^{2}-3\lambda xy}{x^{2}-xy+y^{2}},
\]
we have $\pi_{\lambda}(O_{\lambda})=\lambda$. By corollary \ref{corollary 6},
$\pi_{\lambda}(E_{\lambda}^{*}[2])=\pi(E^{*}[2])$. Thus we just need
to show that $\pi_{\lambda}(E_{\lambda}^{*}[2])$ determines $\lambda$.
Since the points in $\pi_{\lambda}(E_{\lambda}^{*}[2])$ are the roots
of $x^{3}+3\lambda x^{2}-4$, this is clear.\\
(B) $\pi_{\delta}(E_{\delta}^{*}[2])=\{-\delta,1/\delta,-1/\delta\}$
determines $\delta$.\\
(C) We need to show that $\pi_{\delta}(E_{\delta}^{*}[3])$ determines
$\delta$. The nonsingular model of $E_{\delta}$ is
\[
E_{\delta}^{ns}:Y^{2}=X(X-1)(X-\frac{1}{4}(\delta+\frac{1}{\delta})^{2}),
\]
where
\[
X=\frac{(\delta^{2}+1)(\delta x-1)}{2\delta(x-\delta)},Y=\frac{(\delta^{4}-1)y}{4\delta(x-\delta)^{2}}.
\]
The division polynomials of $E_{\delta}^{ns}$ imply the division
polynomials of $E_{\delta}$ via this birational isomorphism. In particular,
the points in $\pi_{\delta}(E_{\delta}^{*}[3])$ are the roots of
$x^{4}+2\delta x^{3}-(2/\delta)x-1$, then the result is immediate.
\end{proof}
Now we are ready to give the main result of this article. For any
$E_{\delta_{1}}$ and $E_{\delta_{2}}$, the intersection of $\pi_{\delta_{1}}(E_{\delta_{1}}[\infty])$
and $\pi_{\delta_{2}}(E_{\delta_{2}}[\infty])$ has at least $6$
elements. If it contains another element $a$, then as we have seen
in the proof of proposition \ref{proposition 16}, it also contains
$-a$, $1/a$, and $-1/a$. Therefore, its cardinality must be $4k+6$.
If we fix $\delta_{1}$ and vary $\delta_{2}$, then $k=1$ can be
easily attained. In the next theorem, we construct an example to improve
$k=2$.
\begin{thm}
\label{theorem 18}
\[
\underset{\{(K,E_{1},O_{1},\pi_{1},E_{2},O_{2},\pi_{2}):\pi_{1}(E_{1}[2])\neq\pi_{2}(E_{2}[2])\}}{\textup{sup}}\#\pi_{1}(E_{1}[\infty])\cap\pi_{2}(E_{2}[\infty])\geq14
\]
\end{thm}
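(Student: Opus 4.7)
The plan is to stay inside the family $(E_\delta, O_\delta, \pi_\delta)$ from proposition \ref{proposition 16} and exhibit a pair $(\delta_1,\delta_2)$ realizing the lower bound $14$. Two structural observations frame the search. First, every member of the family satisfies $\pi_\delta(E_\delta^*[4]) = \{0,\infty,\pm 1,\pm i\}$, so any two members automatically share these six values. Second, translation by the three nontrivial $2$-torsion points induces the Klein four-group of involutions $x\mapsto \pm x^{\pm 1}$ on $\mathbb{P}^1$, and this action is the same on every $E_\delta$; hence every additional common element $a\in \pi_{\delta_1}(E_{\delta_1}[\infty])\cap \pi_{\delta_2}(E_{\delta_2}[\infty])$ lying outside $\{0,\infty,\pm 1,\pm i\}$ automatically brings the orbit $\{a,-a,1/a,-1/a\}$ into the intersection. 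The cardinality is therefore always of the form $6+4k$, and the target $14$ corresponds to $k=2$.

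To produce two additional orbits, I would first transport the standard primitive division polynomials via the birational change of variables
\[
X = \frac{(\delta^2+1)(\delta x - 1)}{2\delta(x-\delta)},\qquad Y = \frac{(\delta^4-1)y}{4\delta(x-\delta)^2}
\]
recorded in the proof of the previous corollary, obtaining for each small $n$ an explicit polynomial $F_n^{E_\delta}(x)\in \mathbb{Q}(\delta)[x]$ whose roots are exactly $\pi_\delta(E_\delta^*[n])$. The existence of a common orbit at levels $(n_1,n_2)$ is then encoded by the vanishing of the resultant $R_{n_1,n_2}(\delta_1,\delta_2)=\operatorname{Res}_x(F_{n_1}^{E_{\delta_1}}(x),F_{n_2}^{E_{\delta_2}}(x))$, while the existence of two independent common orbits amounts to the simultaneous vanishing of two such resultants at some $(\delta_1,\delta_2)$ satisfying the genericity constraint $\delta_2\notin\{\pm\delta_1,\pm 1/\delta_1\}$, which is exactly the condition $\pi_{\delta_1}(E_1[2])\neq \pi_{\delta_2}(E_2[2])$.

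The search is then carried out by looping over small pairs of torsion levels, using the $V_4$-symmetry $\delta\mapsto\pm\delta^{\pm 1}$ to factor out expected spurious components of each resultant, and intersecting two resultant curves in the $(\delta_1,\delta_2)$-plane with the help of Mathematica \cite{14}. Once an isolated nontrivial intersection point with algebraic coordinates is located, one substitutes back to verify directly that the prescribed orbits indeed lie in both $\pi_{\delta_1}(E_{\delta_1}[\infty])$ and $\pi_{\delta_2}(E_{\delta_2}[\infty])$, and that the fourteen resulting points are pairwise distinct.

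The main obstacle is purely computational: the polynomials $F_n^{E_\delta}(x)$ and especially the resultants $R_{n_1,n_2}$ grow very rapidly in degree and in coefficient size, so the success of the approach hinges on picking the torsion levels small enough that the two curves in the $(\delta_1,\delta_2)$-plane can be intersected explicitly, yet rich enough that their common components avoid the locus $\delta_2\in\{\pm\delta_1,\pm 1/\delta_1\}$. Exploiting the $V_4\times V_4$-symmetry acting separately on $\delta_1$ and $\delta_2$ to reduce effective degrees before computing resultants is what makes the elimination tractable, and choosing a judicious smallest-case pair of levels is the creative step that makes the whole construction work.
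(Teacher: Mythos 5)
Your structural setup matches the paper's proof precisely: you work inside the family $(E_\delta,O_\delta,\pi_\delta)$, observe that all members share $\pi_\delta(E_\delta^*[4])=\{0,\infty,\pm1,\pm i\}$, note that the $V_4$ action $x\mapsto\pm x^{\pm1}$ forces the intersection to have cardinality $6+4k$, and aim to realize $k=2$ by finding two further common torsion orbits. Where you diverge is the elimination strategy. You propose intersecting two resultant curves $R_{n_1,n_1}(\delta_1,\delta_2)=\operatorname{Res}_x\bigl(F_{n_1}^{E_{\delta_1}},F_{n_1}^{E_{\delta_2}}\bigr)$ and $R_{n_2,n_2}(\delta_1,\delta_2)$ in the $(\delta_1,\delta_2)$-plane. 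The paper instead makes the key observation that $\tilde F_3(x,\delta)$ is only \emph{quadratic} in $\delta$, so prescribing a common $3$-torsion coordinate $u$ determines the unordered pair $\{\delta_1,\delta_2\}$ outright; the requirement that both $\delta$'s also kill $\tilde F_5(v,\cdot)$ becomes the divisibility $\tilde F_3(u,\delta)\mid\tilde F_5(v,\delta)$ in $\delta$, i.e.\ two polynomial conditions $C_0=C_1=0$ in $(u,v)$, whose resultant in $v$ is a one-variable polynomial in $u$ with an explicit nontrivial degree-$24$ factor. That reparametrization collapses a two-dimensional elimination problem to a one-dimensional one, which is precisely what makes the computation finish. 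Your plan is not wrong, but it trades away this reduction and would face substantially heavier symbolic algebra.

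The genuine gap is that you never close the loop. You do not identify the torsion levels that work (the paper uses $3$ and $5$), you do not show that your two resultant curves actually meet off the trivial locus $\delta_2\in\{\pm\delta_1,\pm\delta_1^{-1}\}$, and you do not exhibit the resulting algebraic point or check that the fourteen projective values $\{0,\infty,\pm1,\pm i\}\cup\{u,-u,u^{-1},-u^{-1}\}\cup\{v,-v,v^{-1},-v^{-1}\}$ are distinct. As written, the argument establishes only that \emph{if} such an intersection point exists with the stated genericity, then the bound $14$ follows; the theorem asserts existence, and that existence is exactly the content of the paper's explicit resultant computation. A complete proof along your route would need to actually carry out the $(\delta_1,\delta_2)$-plane elimination (or adopt the paper's $u$-parametrization), produce the nontrivial factor, and verify both the genericity constraints $u^4\neq0,1$, $u^8+14u^4+1\neq0$ and the pairwise distinctness of the fourteen values.
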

\begin{proof}
From the division polynomials of $E_{\delta}^{ns}$, we know that
the third and fifth primitive division polynomials of $E_{\delta}$
are
\[
\begin{cases}
\tilde{F}_{3}(x,\delta)=2x^{3}\delta^{2}+(x^{4}-1)\delta-2x,\\
\\
\tilde{F}_{5}(x,\delta)=8x^{5}\delta^{6}-4x^{6}(x^{4}-1)\delta^{5}-2x^{3}(x^{8}+6x^{4}+5)\delta^{4}\\
+(x^{12}+5x^{8}-5x^{4}-1)\delta^{3}+2x(5x^{8}+6x^{4}+1)\delta^{2}-4x^{2}(x^{4}-1)\delta-8x^{7}.
\end{cases}
\]
Now we want to find $\delta_{1}$ and $\delta_{2}$ such that there
exist $u\in\pi_{\delta_{1}}(E_{\delta_{1}}^{*}[3])\cap\pi_{\delta_{2}}(E_{\delta_{2}}^{*}[3])$,
and $v\in\pi_{\delta_{1}}(E_{\delta_{1}}^{*}[5])\cap\pi_{\delta_{2}}(E_{\delta_{2}}^{*}[5])$.
In other words, $\tilde{F}_{3}(u,\delta_{1})=\tilde{F}_{3}(u,\delta_{2})=0$,
and $\tilde{F}_{5}(v,\delta_{1})=\tilde{F}_{5}(v,\delta_{2})=0$.
Since $\tilde{F}_{3}(x,\delta)$ is a quadratic polynomial in $\delta$,
any fixed $u$ such that $u^{4}\neq0,1$, and $u^{8}+14u^{4}+1\neq0$
gives exactly two roots satisfying $\delta_{1}^{4},\delta_{2}^{4}\neq0,1$,
and $\delta_{1}\neq\pm\delta_{2}^{\pm1}$. Since $\delta_{1}$ and
$\delta_{2}$ are also two roots of $\tilde{F}_{5}(v,\delta)=0$,
we have $\tilde{F}_{3}(u,\delta)|\tilde{F}_{5}(v,\delta)$ as polynomials
in $\delta$. By long division, this is equivalent to require
\[
\begin{cases}
C_{0}:u^{15}v^{10}-u^{14}v^{11}-u^{13}v^{12}+u^{16}v^{5}-u^{15}v^{6}-22u^{14}v^{7}-5u^{13}v^{8}+20u^{12}v^{9}\\
+5u^{11}v^{10}-2u^{10}v^{11}+u^{9}v^{12}-5u^{14}v^{3}+5u^{13}v^{4}+32u^{12}v^{5}-5u^{11}v^{6}-12u^{10}v^{7}\\
+5u^{9}v^{8}-5u^{7}v^{10}-u^{6}v^{11}+u^{13}+4u^{12}v-10u^{10}v^{3}-5u^{9}v^{4}-2u^{8}v^{5}\\
+5u^{7}v^{6}-6u^{6}v^{7}-u^{3}v^{10}-u^{9}-5u^{6}v^{3}+8u^{4}v^{5}+u^{3}v^{6}+v^{5}=0,\\
\\
C_{1}:u^{19}v^{10}-u^{18}v^{11}-u^{17}v^{12}+u^{20}v^{5}-u^{19}v^{6}-6u^{18}v^{7}-5u^{17}v^{8}+20u^{16}v^{9}+8u^{15}v^{10}\\
-5u^{14}v^{11}-2u^{13}v^{12}-5u^{18}v^{3}+5u^{17}v^{4}+35u^{16}v^{5}+8u^{15}v^{6}-30u^{14}v^{7}-10u^{13}v^{8}-20u^{12}v^{9}\\
-2u^{11}v^{10}+5u^{10}v^{11}-u^{9}v^{12}+u^{17}+4u^{16}v-16u^{15}v^{2}-25u^{14}v^{3}+10u^{13}v^{4}-14u^{12}v^{5}\\
+2u^{11}v^{6}+30u^{10}v^{7}-5u^{9}v^{8}+8u^{7}v^{10}+u^{6}v^{11}+2u^{13}-4u^{12}v+25u^{10}v^{3}+5u^{9}v^{4}\\
-10u^{8}v^{5}-8u^{7}v^{6}+6u^{6}v^{7}+u^{3}v^{10}+u^{9}+5u^{6}v^{3}-11u^{4}v^{5}-u^{3}v^{6}-v^{5}=0.
\end{cases}
\]
Considering $C_{0}$ and $C_{1}$ as polynomials in $v$, then their
resultant is
\[
-2^{48}u^{204}(u^{4}-1)^{36}(32u^{24}+1369u^{20}+18812u^{16}+90646u^{12}+18812u^{8}+1369u^{4}+32),
\]

whose roots are the $u$-coordinates of their common points. Let $u$
such that $u^{4}\neq0,1$ be any nontrivial root, $(u,v)$ the corresponding
common point, $\delta_{1}$ and $\delta_{2}$ the roots of $\tilde{F}_{3}(u,\delta)=0$,
then we have
\[
\begin{cases}
\pi_{\delta_{1}}(E_{\delta_{1}}[2])\neq\pi_{\delta_{2}}(E_{\delta_{2}}[2]),\pi_{\delta_{1}}(E_{\delta_{1}}^{*}[4])=\pi_{\delta_{2}}(E_{\delta_{2}}^{*}[4])=\{0,\infty,\pm1,\pm i\},\\
u\in\pi_{\delta_{1}}(E_{\delta_{1}}^{*}[3])\cap\pi_{\delta_{2}}(E_{\delta_{2}}^{*}[3]),-u,1/u,-1/u\in\pi_{\delta_{1}}(E_{\delta_{1}}^{*}[6])\cap\pi_{\delta_{2}}(E_{\delta_{2}}^{*}[6]),\\
v\in\pi_{\delta_{1}}(E_{\delta_{1}}^{*}[5])\cap\pi_{\delta_{2}}(E_{\delta_{2}}^{*}[5]),-v,1/v,-1/v\in\pi_{\delta_{1}}(E_{\delta_{1}}^{*}[10])\cap\pi_{\delta_{2}}(E_{\delta_{2}}^{*}[10]),
\end{cases}
\]
so the supremum is at least $14$.
\end{proof}
\begin{namedthm}{Remark}\textup{\label{remark 19}After submitting
this article, we have successfully improved the previous result from
$14$ points (using $3$-torsion points and $5$-torsion points) to
$22$ points (using $3$-torsion points and $7$-torsion points).
We will give full details in the next publication.}\end{namedthm}

\begin{namedthm}{Remark}\textup{The intersection can also be investigated
using Tate's explicit parametrization of elliptic curves over $p$-adic
fields \cite{13}. We plan to explore the details of this approach
in the future.}\end{namedthm}

\section{Appendix: Jordan's Totient Function}

We are concerned with the values taken by $J_{k}(n)$ as well. $J_{1}(n)$
is simply Euler's totient function, for which we have the famous Ford's
theorem \cite{4} and Carmichael's conjecture \cite{2,3}. $J_{2}(n)$
is far from being injective, which prevents a simple answer to question
\ref{question 8}. Even their combination is not injective, since
\begin{eqnarray*}
 &  & J_{1}(15)=J_{1}(16)=8,\\
 &  & J_{2}(15)=J_{2}(16)=192.
\end{eqnarray*}
It is quite surprising that $J_{3}(n)$ is still not injective, the
smallest identical pair is
\[
J_{3}(28268)=J_{3}(28710)=19764446869440.
\]
We do not know any identical pair for $k\geq4$, and expect that such
coincidence should be very rare.

\medskip{}

Finally, let us conclude this article with a collection of partial
results:
\begin{prop}
Let $p,q,p_{1}\neq p_{2},q_{1}\neq q_{2}$ be primes, $v_{p}(n)$
the power of $p$ in the prime decomposition of $n$, $\omega(n)$
the number of distinct prime divisors of $n$, then\\
\textup{(A)} If $J_{2}(p^{s})=J_{2}(q^{t})$, then $p^{s}=q^{t}$
or $\{p^{s},q^{t}\}=\{7,8\}$;\\
\textup{(B)} If $J_{k}(p_{1}p_{2})=J_{k}(q_{1}q_{2}),k=2,4$, then
$p_{1}p_{2}=q_{1}q_{2}$;\\
\textup{(C)} If $J_{k}(m)=J_{k}(n),k=2,4,6$, then $v_{p}(m)=v_{p}(n)$
for $p=2,3$, $v_{p}(m)=v_{p}(n)$ or $\{v_{p}(m),v_{p}(n)\}=\{0,1\}$
for any other $p\not\equiv1\textup{ mod }12$, and $\omega(m)=\omega(n)$.\\
\textup{(D)} If $J_{k}(m)=J_{k}(n)$ for infinitely many $k$, then
$m=n$.\end{prop}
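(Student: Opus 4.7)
The plan is to leverage the multiplicative formula
\[
J_{k}(n)=\prod_{p\mid n}p^{k(v_{p}(n)-1)}(p^{k}-1),
\]
which supports both asymptotic analysis in $k$ and $\ell$-adic valuation arguments. Part (D) is almost immediate: since $J_{k}(n)/n^{k}=\prod_{p\mid n}(1-p^{-k})\to1$ as $k\to\infty$, equality $J_k(m)=J_k(n)$ along an infinite set of $k$ forces $(m/n)^{k}\to1$, hence $m=n$.

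For Part (B), $J_{k}(p_{1}p_{2})=(p_{1}^{k}-1)(p_{2}^{k}-1)$; writing $A,B,C,D$ for $p_{1}^{2},p_{2}^{2},q_{1}^{2},q_{2}^{2}$, the case $k=2$ yields $(A-1)(B-1)=(C-1)(D-1)$ and dividing $J_{4}$ by $J_{2}$ yields $(A+1)(B+1)=(C+1)(D+1)$. Adding and subtracting produces $AB=CD$ and $A+B=C+D$, so $\{A,B\}=\{C,D\}$ as roots of a common quadratic. For Part (A), suppose $p<q$; the bound $p^{2}-1<q^{2}$ forces $v_{q}(p^{2}-1)\le1$, and comparing $q$-adic valuations on both sides of $p^{2s-2}(p^{2}-1)=q^{2t-2}(q^{2}-1)$ gives $t=1$. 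Writing $q-1=2u$, $q+1=2v$ with $v=u+1$, $\gcd(u,v)=1$, coprimality forces one of $u,v$ to equal $p^{2s-2}m_{1}$ and the other $m_{2}$, where $m_{1}m_{2}=(p^{2}-1)/4$. For $p\ge5$ the gap $p^{2s-2}\ge p^{2}>(p^{2}-1)/4$ precludes $|u-v|=1$; for $p=3$ a mod-$3$ parity check excludes $s\ge2$; for $p=2$ the constraint $u(u+1)=3\cdot2^{2s-4}$ admits only $u=3$, $v=4$, giving the exceptional pair $\{8,7\}$.

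Part (C) is the main obstacle and requires an $\ell$-adic analysis at each prime $\ell$. For odd $\ell$, even $k$, and $q\ne\ell$ prime, the lifting-the-exponent lemma gives $v_{\ell}(q^{k}-1)=\varepsilon_{\ell}(q)+v_{\ell}(k)$ whenever $d_{q}:=\mathrm{ord}_{\ell}(q)$ divides $k$, and $0$ otherwise; for $\ell=2$ and $k$ even the analogous formula holds with $\varepsilon_{2}(q)=v_{2}(q-1)+v_{2}(q+1)-1$. Substituting gives
\[
v_{\ell}(J_{k}(m))=k(v_{\ell}(m)-1)_{+}+A_{\ell}^{(k)}(m)+B_{\ell}^{(k)}(m)\cdot v_{\ell}(k),
\]
where $B_{\ell}^{(k)}(m)=\#\{q\mid m,\,q\ne\ell:d_{q}\mid k\}$ and $A_{\ell}^{(k)}(m)=\sum\varepsilon_{\ell}(q)$. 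The three inputs $k=2,4,6$ yield three linear equations. For $\ell=2$ the sequence $(v_{2}(k))=(1,2,1)$ lets $k=2,k=6$ isolate $(v_{2}(m)-1)_{+}=(v_{2}(n)-1)_{+}$, giving $v_{2}(m)=v_{2}(n)$; the remaining equation then forces $B_{2}(m)=B_{2}(n)$. For $\ell=3$ the sequence $(0,0,1)$ lets $k=2,k=4$ play the same role, giving $v_{3}(m)=v_{3}(n)$ and then $B_{3}(m)=B_{3}(n)$; together these yield $\omega(m)=\omega(n)$. For the third clause, apply the same analysis at $\ell=p\ne2,3$: since $v_{p}(k)=0$ for $k\in\{2,4,6\}$, the equations reduce to $k(v_{p}(m)-1)_{+}+A_{p}^{(k)}(m)=k(v_{p}(n)-1)_{+}+A_{p}^{(k)}(n)$. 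The restriction $p\not\equiv1\pmod{12}$ is exactly the condition that $\gcd(12,p-1)<12$, which forces two of the three divisor sets $\{d\mid k,\,d\mid p-1\}$ for $k\in\{2,4,6\}$ to coincide; the corresponding two equations share $A_{p}^{(k)}$ and differ only in the leading $k\cdot(v_{p}(m)-1)_{+}$ term, so their difference isolates $(v_{p}(m)-1)_{+}=(v_{p}(n)-1)_{+}$, which is precisely the stated $\{0,1\}$-dichotomy.

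The delicate point is verifying the coincidence of divisor sets case by case across $p\equiv5,7,11\pmod{12}$ (namely, $k=2,6$ coincide when $3\nmid p-1$, while $k=2,4$ coincide when $4\nmid p-1$, and both when $p\equiv11\pmod{12}$), and likewise handling the slightly different LTE formula at $\ell=2$. Once this finite bookkeeping is done, every assertion of (C) reduces to solving a small linear system in the unknowns $(v_{\ell}(m)-1)_{+}$, $A_{\ell}^{(k)}(m)$, $B_{\ell}^{(k)}(m)$.
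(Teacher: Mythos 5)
Parts (A), (B), and (D) are essentially sound. Your (D) is the same asymptotic argument as the paper's; your (B) fills in the `straightforward' step the paper leaves to the reader; your (A) takes a different route (bounding the $q$-adic valuation first, then exploiting the gap between the two coprime factors of $q^{2}-1$) but reaches the same conclusion. One small inaccuracy: the phrase ``for $p=3$ a mod-$3$ parity check'' is not what actually finishes that subcase---the same gap argument you use for $p\ge5$ already works for $p=3$ (from $uv=3^{2s-2}\cdot2$, $\gcd(u,v)=1$, $|u-v|=1$ and $s\ge2$, one of $u,v$ is at least $3^{2s-2}\ge9$ and the other at most $2$).

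The gap is in (C), and it is genuine. Your LTE/linear-system setup is correct as far as it goes: subtracting the $k=2$ and $k=6$ equations at $\ell=2$, and the $k=2$ and $k=4$ equations at $\ell=3$, yields $(v_{\ell}(m)-1)_{+}=(v_{\ell}(n)-1)_{+}$, and then $B_{\ell}(m)=B_{\ell}(n)$, $A_{\ell}(m)=A_{\ell}(n)$. But you then assert ``giving $v_{2}(m)=v_{2}(n)$'' (and likewise for $3$), and this does not follow: $(v-1)_{+}$ cannot distinguish $v=0$ from $v=1$, so the linear system is entirely consistent with $\{v_{2}(m),v_{2}(n)\}=\{0,1\}$. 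In fact, the whole point of the distinction the proposition draws between $p\in\{2,3\}$ (where it claims strict equality) and other $p\not\equiv1\bmod12$ (where it only claims the $\{0,1\}$ dichotomy) is that an extra input is needed for $p=2,3$. The paper supplies it by an analytic bound on
\[
\frac{J_{4}(n)}{J_{2}(n)^{2}}=\prod_{p\mid n}\frac{p^{2}+1}{p^{2}-1},
\]
using $\prod_{p}\frac{p^{2}+1}{p^{2}-1}=\zeta(2)^{2}/\zeta(4)=\tfrac{5}{2}$: if $v_{2}(m)=0$ and $v_{2}(n)=1$, then the single factor $\tfrac{5}{3}$ already exceeds $\prod_{p\ge3}\frac{p^{2}+1}{p^{2}-1}=\tfrac{3}{2}$, a contradiction; analogously $\tfrac{5}{4}>\prod_{p\ge5}\frac{p^{2}+1}{p^{2}-1}=\tfrac{6}{5}$ kills the $p=3$ case. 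Nothing in your proposal plays this role. Consequently your claim $\omega(m)=\omega(n)$ also breaks down: $B_{2}(m)=B_{2}(n)$ only gives $\omega(m)-[2\mid m]=\omega(n)-[2\mid n]$, and without $v_{2}(m)=v_{2}(n)$ you cannot cancel the indicator terms. Your treatment of $p\not\equiv1\bmod12$, $p\ge5$ via coinciding divisor sets $\{d:d\mid\gcd(k,p-1)\}$ is a legitimate reformulation of the paper's quadratic-residue argument, but it is only proving the weaker dichotomy there, not strict equality, which is correct and consistent with the statement.

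Once you add the $J_{4}/J_{2}^{2}$ Euler-product comparison (or an equivalent global bound) to upgrade the $\ell=2,3$ dichotomies to equalities, the rest of your bookkeeping for (C) goes through.
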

\begin{proof}
(A) Now we have $p^{2s-2}(p^{2}-1)=q^{2t-2}(q^{2}-1)$. If $p,q\neq2$
and $s,t\neq1$, then $p$ (resp. $q$) is the largest prime divisor
of left-hand side (resp. right-hand side). Hence $p=q$ and $s=t$.
If $p=2$, then $3\cdot2^{2s-2}=q^{2t-2}(q^{2}-1)$. Since $4$ cannot
be a common divisor of $q+1$ and $q-1$, one of them must be a divisor
of $6$. Hence $q=2$ and $s=t$, or $\{p^{s},q^{t}\}=\{7,8\}$. If
$s=1$, then $p^{2}-1=q^{2t-2}(q^{2}-1)$. We can assume $q\neq2$,
then $q$ cannot be a common divisor of $p+1$ and $p-1$, so $q^{2t-2}\leq p\pm1=(p\mp1)\pm2\leq(q^{2}-1)\pm2$,
so $t=1$ and $p=q$, or $t=2$. If $t=2$, then either $q^{2}=p+1=(p-1)+2=(q^{2}-1)+2$
or $2q^{2}\leq p+1=(p-1)+2\leq(q^{2}-1)/2+2$, neither is possible.\\
(B) It is straightforward that $(p_{1}^{2}-1)(p_{2}^{2}-1)=(q_{1}^{2}-1)(q_{2}^{2}-1)$
and $(p_{1}^{4}-1)(p_{2}^{4}-1)=(q_{1}^{4}-1)(q_{2}^{4}-1)$ imply
$p_{1}p_{2}=q_{1}q_{2}$.\\
(C) Since $2\nmid p^{4}+p^{2}+1=(p^{2}+p+1)(p^{2}-p+1)$, from $J_{6}(n)/J_{2}(n)=\prod_{p|n}p^{4v_{p}(n)-4}(p^{4}+p^{2}+1)$,
we can see $v_{2}(m)=v_{2}(n)$ or $\{v_{2}(m),v_{2}(n)\}=\{0,1\}$.
Since $3\nmid p^{2}+1$, from $J_{4}(n)/J_{2}(n)=\prod_{p|n}p^{2v_{p}(n)-2}(p^{2}+1)$,
we can see $v_{3}(m)=v_{3}(n)$ or $\{v_{3}(m),v_{3}(n)\}=\{0,1\}$.
This strategy works for any other $p\not\equiv1\text{ mod }12$, since
either $-1$ or $-3$ is a quadratic nonresidue. If $v_{2}(m)=0$
and $v_{2}(n)=1$, then $5/3=(2^{2}+1)/(2^{2}-1)\leq J_{4}(n)/J_{2}^{2}(n)=J_{4}(m)/J_{2}^{2}(m)\leq\prod_{p\geq3}(p^{2}+1)/(p^{2}-1)=3/2$,
a contradiction. If $v_{3}(m)=0$ and $v_{3}(n)=1$, then $5/4=(3^{2}+1)/(3^{2}-1)\leq J_{4}(n)/J_{2}^{2}(n)=J_{4}(m)/J_{2}^{2}(m)\leq\prod_{p\geq5}(p^{2}+1)/(p^{2}-1)=6/5$,
again a contradiction. Moreover, since $2|p^{2}+1$, but $4\nmid p^{2}+1$,
from $J_{4}(n)/J_{2}(n)$, we can see $\omega(m)=\omega(n)$.\\
(D) Assume $k>1$, then $m^{k}/\zeta(k)<J_{k}(m)=J_{k}(n)\leq n^{k}$,
thus $\zeta(k)^{-1/k}<m/n<\zeta(k)^{1/k}$ by symmetry. Since $\zeta(k)^{1/k}\to1$
as $k\to\infty$, we have $m=n$.
\end{proof}
\textbf{Acknowledgments.} The first author acknowledges that the article
was prepared within the framework of a subsidy granted to the HSE
by the Government of the Russian Federation for the implementation
of the Global Competitiveness Program. The first author was also supported
by a Simons Travel Grant. The second author was supported by the MacCracken
Program offered by New York University.

Fedor Bogomolov\\
Courant Institute of Mathematical Sciences, New York University\\
251 Mercer Street, New York, NY 10012, USA\\
Email: bogomolo@cims.nyu.edu

\medskip{}

Fedor Bogomolov\\
Laboratory of Algebraic Geometry and its Applications\\
National Research University Higher School of Economics\\
7 Vavilova Street, 117312 Moscow, Russia

\medskip{}

Hang Fu\\
Courant Institute of Mathematical Sciences, New York University\\
251 Mercer Street, New York, NY 10012, USA\\
Email: fu@cims.nyu.edu
\end{document}